\newcommand{\Z}{\mathbb Z}
\newcommand{\Q}{\mathbb Q}
\newcommand{\R}{\mathbb R}
\newcommand{\C}{\mathbb C}
\let\conjugate\overline
\let\clos\overline
\let\epsilon\varepsilon
\let\phib\phi
\let\phi\varphi
\newcommand*{\mb}[1]{\ensuremath{\mathbb{#1}}}
\newcommand*{\mc}[1]{\ensuremath{\mathcal{#1}}}
\newcommand*{\mf}[1]{\ensuremath{\mathfrak{#1}}}
\newcommand*{\bs}{\backslash}
\DeclareMathOperator{\Hom}{Hom}
\DeclareMathOperator{\Spec}{Spec}
\DeclareMathOperator{\Lie}{Lie}
\DeclareMathOperator{\End}{End}
\DeclareMathOperator{\Tr}{Tr}
\DeclareMathOperator{\Res}{Res}
\DeclareMathOperator{\Fil}{Fil}
\DeclareMathOperator{\dR}{dR}
\DeclareMathOperator{\Gal}{Gal}
\DeclareMathOperator{\Ind}{Ind}
\DeclarePairedDelimiter{\norm}{\lVert}{\rVert}
\DeclarePairedDelimiter{\abs}{\lvert}{\rvert}
\DeclarePairedDelimiter{\inner}{\langle}{\rangle}
\DeclarePairedDelimiter{\set}{\{}{\}}
\DeclarePairedDelimiter{\paren}{(}{)}
\title{Towards the Colmez Conjecture}
\author{Roy Zhao}
\address{Yau Mathematics and Sciences Center, Tsinghua University, Beijing, China.}
\email{rhzhao@tsinghua.edu.cn}
\keywords {Colmez Conjecture}
\subjclass[2020] {11G10, 14G40}
\newtheorem{theo}{Theorem}[section]
\newtheorem{coro}[theo]{Corollary}
\newtheorem{conj}[theo]{Conjecture}
\newtheorem{prop}[theo]{Proposition}
\newtheorem{lemm}[theo]{Lemma}
\theoremstyle{definition}
\newtheorem{defi}[theo]{Definition}
\newtheorem{exam}[theo]{Example}
\theoremstyle{remark}
\newtheorem*{rema}{Remark}
\begin{document}
\begin{abstract}
	We prove a collection of results involving Colmez's periods and the Colmez Conjecture.
	Using Colmez's theory of periods of CM abelian varieties, we propose a definition for the height of a partial CM-type and prove that the Colmez conjecture follows from an arithmetic period formula for surfaces.
	We give an explicit conjecture for the form of this period formula, which relates the height of special points on a Shimura surface with special values of $L$-functions.
	Further, we relate the heights of periods given by Colmez to arithmetic degree of Hermitian line bundles and thus give a formulation of Colmez's full conjecture in geometric terms.
\end{abstract}

\maketitle

\section{Introduction}

In his 1993 article \cite{Col93}, Colmez explicitly constructs a height function for certain periods of a CM abelian variety and conjectures that this height can be expressed in terms of special values of logarithmic derivatives of $L$-functions.
One consequence of this conjecture, and how this conjecture is usually presented in the literature, is an explicit formula between the Faltings height of a CM abelian variety and the logarithmic derivatives of $L$-functions.
This conjecture is equivalent to the Chowla--Selberg formula in the case of elliptic curves.
We first precisely state the conjecture following \cite{Col93}.

Let $\Q^{CM}$ be the maximal CM-extension of $\Q$ and let $c$ denote complex conjugation on $\Q^{CM}$.
Let $\mc{CM}$ be the set of locally constant functions $f \colon \Gal(\clos{\Q}/\Q) \to \Q$ that factor through $\Gal(\Q^{CM}/\Q)$ and satisfy the condition that $f(x) + f(cx)$ is independent of $x \in \Gal(\clos{\Q}/\Q)$.
Define $\mc{CM}^0 \subset \mc{CM}$ as the subset of class functions, namely those functions $f$ such that $f(xyx^{-1}) = f(y)$ for all $x, y \in \Gal(\Q^{CM}/\Q)$.

Tensoring up to $\C$, the set of class functions $\mc{CM}^0 \otimes \C$ admits a basis in terms of Artin characters.
Let $\mu_{Art}$ be the $\C$-linear functional on $\mc{CM}^0$ given by its value on an Artin character $\chi$ as $\mu_{Art}(\chi) = \log f_\chi$, where $f_\chi$ is the conductor of $\chi$.
For $s \in \C$, let $Z(\cdot, s)$ be another $\C$-linear functional on $\mc{CM}^0$ given on Artin characters as $Z(\chi, s) = L'(\chi, s)/L(\chi, s)$.

Colmez defines another $\C$-linear functional on $\mc{CM}^0$ defined by using another basis in terms of CM-types of CM-fields.
Fix an embedding of $\clos{\Q} \subset \C$.
If $E/\Q$ is a number field, define $H_E \coloneqq \Hom(E, \clos{\Q}) = \Hom(E, \C)$.
For a CM-field $E$, CM-type $\Phi \subset H_E$, and embedding $\tau \in H_E$, define the function $a_{\Phi, \tau} \colon \Gal(\clos{\Q}/\Q) \to \C$ by the formula
\[a_{\Phi, \tau}(x) = \begin{cases}
    1 & \text{if $x\tau \in \Phi$,}\\
    0 & \text{otherwise.}
\end{cases}\]
Let $a_{\Phi, \tau}^0 \in \mc{CM}^0$ be its average under conjugation, namely
\[a_{\Phi, \tau}^0 = \frac{1}{[K : \Q]} \sum_{\sigma \in H_K} a_{\sigma \Phi, \sigma \tau}\]
for a number field $K$ containing all conjugates of $E$.
Finally set $A_{\Phi} = \sum_{\tau \in \Phi} a_{\Phi, \tau}$ and $A_{\Phi}^0 = \sum_{\tau \in \Phi} a_{\Phi, \tau}^0$.

Let $A$ be a CM abelian variety of type $(\mc{O}_E, \Phi)$ and let $\tau \in \Phi$.
Colmez uses the pairing between $H_{\dR}^1(A/\C)$ and $H_1(A(\C), \Z)$ to define a $\tau$-component of the Faltings height, which we denote $ht(\Phi, \tau)$.
He then proves that $ht(a_{\Phi, \tau}^0) \coloneqq ht(\Phi, \tau)$ can be extended to a linear height function, denoted $ht$, on $\mc{CM}^0$.
Moreover, it can be related to the Faltings height of a CM abelian variety.
The precise definition of the Faltings height of an abelian variety is given in Definition \ref{defn:Faltings Height}.

\begin{theo}[{{\cite[Conj. II.2.10]{Col93}}}]
	If $A$ is a CM abelian variety of type $(\mc{O}_E, \Phi)$, then
	\[h_{\mathrm{Falt}}(A) = ht(A_{\Phi}^0) - \frac{1}{2}\mu_{Art}(A_{\Phi}^0).\]
	Since the right hand side depends only on the CM-type $\Phi$, we write $h(\Phi)$ to denote the Faltings height of any (hence all) abelian varieties with CM of type $(\mc{O}_E, \Phi)$.
\end{theo}

Colmez conjectures that the height functions given by $ht$ and $Z(\cdot, s)$ are the same.
\begin{conj}[{{\cite[Conj. II.2.11]{Col93}}}]\label{conj:Original Colmez Conjecture}
    If $a \in \mc{CM}^0$ is a class function, let its dual $a^\vee \colon \Gal(\clos{\Q}/\Q) \to \C$ be the function given by $a^\vee(g) = a(g^{-1})$ for all $g \in \Gal(\clos{\Q}/\Q)$.
    Then
    \[ht(a) = -Z(a^\vee, 0)\]
    for all $a \in \mc{CM}^0$.
\end{conj}

However the version commonly stated in the literature is in terms of Faltings heights and $A_\Phi$.
Note that $A_\Phi^\vee = A_\Phi$ and proving the Colmez Conjecture for all CM abelian varieties will only prove it for $a \in \mc{CM}^0$ such that $a = a^\vee$.

\begin{conj}[Colmez Conjecture]\label{conj:Colmez}
    \[h(\Phi) = -Z(A_\Phi^0, 0) - \frac{1}{2}\mu_{Art}(A_\Phi^0).\]
\end{conj}

In the same work, Colmez proved Conjecture \ref{conj:Colmez} for all abelian CM fields up to a rational multiple of $\log 2$, which was later fully proven by Obus (\cite{Obu13}).
Yang proved the conjecture when $A$ is an abelian surface, and thus when $\abs{\Phi} = 2$ (\cite{Yan10}).
Colmez also gave the form of the conjecture when both sides are averaged over all CM-types $\Phi$ of a fixed CM field $E$.
Stated in that form, it says that
\[\frac{1}{2^g}\sum_\Phi h(\Phi) = -\frac{1}{2} \frac{L'(\chi_{E/F}, 0)}{L(\chi_{E/F}, 0)} - \frac{1}{8} \log(d_{E/F}d_E) + \frac{[E:\Q]}{4}\log 2\pi,\]
where $F$ is the maximal totally real subfield of $E$ and $\chi_{E/F}$ is the character associated with the quadratic extension.
This averaged version was proven independently by Yuan and Zhang (\cite{Yua18}) and a group of Andreatta, Goren, Howard, and Madapusi (\cite{And18}).
This average result was extended to certain ``unitary CM-types of signature $(n - 1, 1)$'' by Yang and Yin (\cite{Yang18}).
Moreover, using this averaged result, Barquero-Sanchez, Masri, and Thorne were able to prove that $100\%$ of CM abelian varieties satisfy the Conjecture \ref{conj:Colmez} (\cite{BMT23}).
This averaged version also has other far-ranging consequences.
Tsimerman showed that the averaged Colmez Conjecture implies the Andr\'e--Oort Conjecture for the moduli space of abelian varieties (\cite{Tsi18}) and later with Pila, Shankar, Esnault, and Groechenig proved that it implied the full Andr\'e--Oort Conjecture for all Shimura varieties (\cite{Pil24}).

In this article, we prove two main results which serve as steps towards proving Conjecture \ref{conj:Colmez} as well as Colmez's original conjecture that $ht(a) = Z(a^\vee, s)$ for all $a \in \mc{CM}^0$.
We define the height of a subset $\phib \subset \Phi$ of a CM-type, which we call a partial CM-type, and express the Faltings height of a CM abelian variety in terms of heights of partial CM-types of order $2$ in Proposition \ref{prop:Full in terms of Partial}.
A consequence of this result is that Conjecture \ref{conj:Colmez} can be reduced to proving explicit identities for heights of partial CM-types of cardinality $2$ (see Theorem \ref{thm:Surface implies Colmez}).
Combining Theorem \ref{thm:Surface implies Colmez} with \cite{Zha23} gives Corollary \ref{coro:Surface implies Colmez}, which reduces Conjecture \ref{conj:Colmez} to an explicit height formula for CM points on a quaternionic Shimura surface.
Moreover, we make some progress towards giving a geometric reformulation for the period formula given by Conjecture \ref{conj:Original Colmez Conjecture}.
In Theorem \ref{theo:Intro Comparison Theorem} (also Theorem \ref{thm:ColmezYuanComparison}), we express the periods given in \cite{Col93} in terms of Arakelov geometry, and by doing so, are able to recover the observation of \cite{Yua18} that the height of nearby CM-types depends only on the CM field, and not on the nearby pair (Corollary \ref{cor:Nearby Height Ind of Pair}).

We now give a more precise description of our results.
For the first result, let $\phib \subset \Phi$ be a partial CM-type.
We use the height function $ht$ to propose an explicit definition for the height of a partial CM-type $\phib$.
Note that we can simplify $A_\Phi$ for a CM-type $\Phi$ as
\[A_\Phi(g) = \abs{\Phi \cap g\Phi}.\]
However, the function $g \mapsto \abs{\phib \cap g \phib}$ is not in $\mc{CM}^0$ because $\abs{\phib \cap g \phib} + \abs{\phib \cap cg\phib}$ is not independent of $g$.
We fix this to define the height of a partial CM-type.

\begin{defi}\label{defi:Definition of height of partial CM type}
    Let $\phib \subset H_E$ be a partial CM-type of $E$ and let $\Sigma \subset H_F$ denote the set of places obtained by restricting those in $\phib$ to $F$.
    Let $A_\phib \in \mc{CM}$ be given by the formula
    \[A_\phib(x) = \abs{\phib \cap x\phib} + \frac{1}{2}\abs{\Sigma \cap x \Sigma^c}.\]
    The height of the partial CM-type $\phib$ is
    \[h(\phib) \coloneqq ht(A_\phib^0) - \frac{1}{2}\mu_{Art}(A_\phib^0).\]
\end{defi}

We give formulas expressing the Faltings height of a CM-abelian variety in terms of heights of partial CM-types of size $2$ (see Proposition \ref{prop:Partial in terms of Full}).
This means that Conjecture \ref{conj:Colmez} can be reduced to proving it for partial CM-types of size $2$, and we precisely determine the exact form of the $L$-functions and their conductors.

\begin{theo}\label{thm:Surface implies Colmez}
    Suppose that $E/\Q$ is a Galois CM-extension and if $\sigma, \tau \in H_E$ are distinct places so that $\sigma \neq \conjugate{\tau}$, let $x_{\sigma, \tau} \in \Gal(E/\Q)$ be the element such that $x_{\sigma, \tau}\sigma = \tau$.
    Let $c \in \Gal(E/\Q)$ denote complex conjugation.

    Conjecture \ref{conj:Colmez} holds for every CM-type of every CM-subfield of $E$ if and only if for every choice of $\sigma, \tau \in H_E$ such that $\sigma \neq \conjugate{\tau}$, we have
    \begin{align*}
        h(\phib) = & -  \frac{1}{2}\sum_{\chi}\paren*{\chi(x_{\sigma, \tau}) + \chi(x_{\sigma, \tau}c) + \chi(x_{\sigma, \tau})^{-1} + \chi(x_{\sigma, \tau}c)^{-1}}\frac{L'(\chi, 0)}{L(\chi, 0)}\\
        &-\frac{1}{g}\frac{L'(\chi_{E/F}, 0)}{L(\chi_{E/F}, 0)} - \frac{1}{4[E:\Q]}\log d_E + \frac{1}{4}\log d_\phib - \frac{1}{8} \log d_\Sigma + \log 2\pi,
    \end{align*}
    where $\chi$ runs over all irreducible characters of $\Gal(E/\Q)$, the partial CM-type is $\phib = \set{\sigma, \tau}$ with $\Sigma = \set{\sigma|_F, \tau|_F} \subset H_F$, and $d_\phib$ and $d_\Sigma$ are certain root discriminants defined in Definition \ref{def:CM Type Determinant}.
\end{theo}
\begin{proof}
    This immediately follows from Proposition \ref{prop:Full in terms of Partial}, Corollary \ref{coro:Suffice to prove for pairs}, and Proposition \ref{prop:Height of Partial CM Type L Func}.
\end{proof}

Using previous work of the author \cite{Zha23}, we can restate the formula given in the previous theorem as a period formula for quaternionic Shimura surfaces.
We now precisely state this period formula, and refer to \cite{Del71} for more details about Shimura varieties.

Let $E, F, \phib, \Sigma$ be as in the setting of the previous theorem.
Let $B/F$ be a quaternion algebra which permits an embedding $E \to B$ and assume that at the archimedean places, $B$ is ramified exactly at the infinite places corresponding to $\Sigma$.
Let $G/\Q$ be the algebraic group given by $G \coloneqq \Res_{F/\Q} B^\times$.
Setting $\mc{H}^{\pm}$ to be the upper and lower half-planes, the Shimura datum $(G, (\mc{H}^{\pm})^\Sigma)$ gives a tower of Shimura varieties
\[X_U(\C) = G(\Q) \bs (\mc{H}^\pm)^\Sigma \times G(\mb{A}_f) / U\]
for every compact open subgroup $U \subset G(\mb{A}_f)$.

The complex variety $X_U$ has an algebraic model defined over a subfield of $E$.
It can be defined over the field $F_\Sigma$, which is the fixed field of all the elements of $\Gal(E/\Q)$ which fix $\Sigma \subset H_F$.
To simplify notation, we base change up to $E$ and view $X_U$ as a variety defined over $\Spec E$.
In \cite{Zha23}, for $U = \prod_p U_p \subset G(\mb{A}_f)$ maximal, an integral model of $X_U$ is constructed over $\Spec \mc{O}_E$, which we denote $\mc{X}_U$.

Let $\widehat{\mc{L}_U} \coloneqq (\mc{L}_U, \norm{\cdot})$ be the arithmetic Hodge bundle on $\mc{X}_U$, which consists of the pair of Hodge bundle $\mc{L}_U$ over $\Spec \mc{O}_E$ and Hermitian metric on $\mc{L}_{U, \C} \cong \Omega_{(\mc{H}^{\pm})^\Sigma}^\Sigma$ given by $\norm{\bigwedge_{\sigma \in \Sigma} dz_\sigma} = \prod_{\sigma \in \Sigma} 2\Im(z_\sigma)$.

The embedding $E \to B$ gives a map $\Res_{E/\Q}\mb{G}_m \to G$ and the choice of partial CM-type $\phib$ gives us the image of cocharacter of $\Res_{E/\Q} \mb{G}_m$ in $(\mc{H}^{\pm})^\Sigma$ by specifying the upper or lower half-plane, and thus the combination gives a mapping of Shimura datum and a CM-point $P_U \in X_U(\clos{\Q})$.
An integral model $\mc{X}_U$ of $X_U$ was constructed in \cite{Zha23} and let $\mc{P}_U$ denote the Zariski closure of $P_U$ in $\mc{X}_U$.
Let $h_{\widehat{\mc{L}_U}}(\mc{P}_U)$ be the Arakelov degree.
Then a consequence of \cite[Thm. 1.1]{Zha23} (and the proof of \cite[Thm. 7.5]{Zha23} for the specific log terms) is the equivalence of the following arithmetic period formula with the Colmez Conjecture.

\begin{coro}\label{coro:Surface implies Colmez}
    Suppose that $U = \prod_p U_p$ is a maximal compact subgroup of $G(\mb{A}_f)$. Then Conjecture \ref{conj:Colmez} holds for all CM-types of $E$ and all of its CM-subfields if and only if for all choices of $\sigma, \tau$ we have
    \begin{align*}
        \frac{1}{2}h_{\widehat{\mc{L}_U}}(\mc{P}_U) = & -  \frac{1}{2}\sum_{\chi}\paren*{\chi(x_{\sigma, \tau}) + \chi(x_{\sigma, \tau}c) + \frac{1}{\chi(x_{\sigma, \tau})} + \frac{1}{\chi(x_{\sigma, \tau}c)}}\frac{L'(\chi, 0)}{L(\chi, 0)}\\
        &-\frac{1}{g}\frac{L'(\chi_{E/F}, 0)}{L(\chi_{E/F}, 0)} - \frac{1}{8g}\log d_Ed_B^{-2} + \frac{1}{8}\log d_\phib^2d_\Sigma + \log 2\pi,
    \end{align*}
\end{coro}

Then towards Colmez original conjecture, we express his height of CM periods in geometrical terms as the arithmetic degree of a certain line bundle.
Namely in \cite{Yua18}, for a CM-type $(E, \Phi)$ and $\tau \in \Phi$, they define the $\tau$-component of the height $h(\Phi)$, which we denote by $h(\Phi, \tau)$, to be the Arakelov degree of a metrized line bundle.
We compare this to Colmez's height function $ht(\Phi, \tau)$ which was defined in terms of period pairings.
We prove the following theorem, which appears as Theorem \ref{thm:ColmezYuanComparison} later in the article.
\begin{theo}\label{theo:Intro Comparison Theorem}
    Let $E/\Q$ be a CM field with absolute discriminant $d_E \in \Z$.
    Let $\Phi$ be a CM-type of $E$ and $\tau \in \Phi$.
    Let $h(\Phi, \tau)$ be as defined in \cite[Thm. 2.2]{Yua18} and $ht(\Phi, \tau)$ be as defined in \cite[Lem. II.2.9]{Col93}.
    Then
    \[h(\Phi, \tau) = ht(\Phi, \tau) + \frac{1}{2} \log 2\pi + \frac{1}{2[E:\Q]}\log \abs{d_E} - \mu_{Art}(a_{\Phi, \tau}^0).\]
\end{theo}
We hope that this gives an avenue for proving Colmez's original conjecture for all functions in $\mc{CM}^0$.

\subsection{Structure of the Article}

In Section \ref{sec:Partial CM Type and Colmez}, we prove many results about our height of a partial CM-type.
Namely, we express heights of partial CM-types in terms of heights of (full) CM-types in Proposition \ref{prop:Full in terms of Partial} and heights of CM-types in terms of partial CM-types \ref{prop:Partial in terms of Full}.
Then, we explicitly compute the height of a partial CM-type of size $2$ assuming the Colmez Conjecture in Proposition \ref{prop:Height of Partial CM Type L Func}.
Finally, we give an example showing that these heights of partial CM-types are not comparable in Example \ref{exam:Height of pair large}.
In Section \ref{sec:Decomposition of Faltings Height}, we recall the definition of the Faltings height of an abelian variety.
Moreover, for a CM-type $\Phi$ and $\tau \in \Phi$, we recall the geometric definition of the $\tau$-component of $h(\Phi)$ given in \cite[Thm. 2.2]{Yua18}, and the definition in terms of periods given in \cite[Lem. II.2.9]{Col93}.
Then, in Section \ref{sec:Comparison of YZ and Colmez}, we give an explicit comparison identity between the geometric and period definitions (Theorem \ref{thm:ColmezYuanComparison}).

\subsection{Acknowledgements}
We wish to thank Pierre Colmez for encouraging me to propose a definition for a partial CM-type and Shou-Wu Zhang for suggesting that the Colmez Conjecture can be reduced to a $2$-dimensional period formula.
We would also like to thank Ziqi Guo for asking about how heights of partial CM-types vary for a fixed CM-field $E$.
Finally, we would like to thank the anonymous referee for their careful reading and suggestions, improving the presentation of this article.

\section{Partial CM Types and the Colmez Conjecture}\label{sec:Partial CM Type and Colmez}

Let $E$ be a CM-field, which is a totally imaginary quadratic extension of a totally real number field, and let its degree be $[E : \Q] = 2g$ and its ring of integers be $\mc{O}_E$.
There exists a unique complex conjugation automorphism of $E$, which we denote $c$, and for $\sigma \in H_E$, let $\conjugate{\sigma} \coloneqq \sigma \circ c$.
A CM-type $\Phi \subset H_E$ is a subset such that $\Phi \sqcup \conjugate{\Phi} = H_E$.
Note that giving a CM-type $\Phi$ is the same as giving an isomorphism $E \otimes \R \to \C^g$.

\begin{defi}
    An abelian variety $A$ has complex multiplication of type $(\mc{O}_E, \Phi)$ if there exists an embedding $\iota \colon \mc{O}_E \to \End(A)$ such that $\Lie(A) \cong E \otimes \R$ as $\mc{O}_E$-modules.
\end{defi}

Let $A$ be an abelian variety with CM of type $(\mc{O}_E, \Phi)$ and let $\tau \in \Phi$.
Colmez uses the pairing between $H_{\dR}^1(A/\C)$ and $H_1(A(\C), \Z)$ to define a $\tau$-component of the Faltings height, which we denote $ht(\Phi, \tau)$.
He then proves that $ht(\Phi, \tau)$ can be extended to a linear height function on $\mc{CM}^0$.

\begin{theo}[{\cite[Thm. II.2.10]{Col93}}]\label{thm:ColmezHeightFunction}
    \begin{enumerate}[(i)]
        \item There exists a unique $\Q$-linear map, denoted $ht$, from $\mc{CM}^0$ to $\R$ such that, if $E$ is a CM-field and $\tau \in H_E$ and $\Phi$ a CM-type of $E$, then $ht(a_{\Phi, \tau}^0) = ht(\Phi, \tau)$.
        
        \item If $A$ is a CM abelian variety of type $(\mc{O}_E, \Phi)$, the Faltings height of $A$ depends only on $(\mc{O}_E, \Phi)$ and is given by the formula
        \[h(\Phi) \coloneqq h(A) = ht(A_{\Phi}^0) - \frac{1}{2}\mu_{Art}(A_{\Phi}^0).\]
    \end{enumerate}
\end{theo}

We also define an auxiliary function $b_{\tau, \rho} \in \mc{CM}^0$ for $\tau, \rho \in H_E$ that will be useful later as
\[b_{\tau, \rho}(x) = \begin{cases}
    \frac{1}{2} & \text{if $x\tau = \rho$,}\\
    -\frac{1}{2} & \text{if $x\tau = \conjugate{\rho}$,}\\
    0 & \text{otherwise.}
\end{cases}\]

We note that we can write this height explicitly in terms of Faltings heights of CM abelian varieties.

\begin{prop}\label{prop:Partial in terms of Full}
    Let $h(\Phi)$ denote the Faltings height of an abelian variety with CM of type $(\mc{O}_E, \Phi)$.
    Let $[E : \Q] = 2g$.
    Then
    \[h(\phib) = \frac{1}{2^{g - \abs{\phib}}}\sum_{\Phi \supset \phib} h(\Phi) - \frac{g - \abs{\phib}}{g2^g} \sum_{\Phi'} h(\Phi'),\]
    where the first sum is taken over all CM-types of $E$ containing $\phib$ and the second is an unrestricted sum of CM-types.
\end{prop}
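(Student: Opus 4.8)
The plan is to reduce everything to a linear identity in $\mc{CM}^0$, since both $h(\phib)$ and $h(\Phi)$ are built from the $\Q$-linear functionals $ht$ and $\mu_{Art}$ applied to certain elements of $\mc{CM}^0$. By definition $h(\phib) = ht(A_\phib^0) - \frac12 \mu_{Art}(A_\phib^0)$ and, by Theorem \ref{thm:ColmezHeightFunction}(ii), $h(\Phi) = ht(A_\Phi^0) - \frac12 \mu_{Art}(A_\Phi^0)$ for a CM-type $\Phi$. So it suffices to prove the single identity in $\mc{CM}^0$
\[
A_\phib^0 = \frac{1}{2^{g-\abs{\phib}}}\sum_{\Phi \supset \phib} A_\Phi^0 \;-\; \frac{g-\abs{\phib}}{g\,2^g}\sum_{\Phi'} A_{\Phi'}^0,
\]
and then apply the two linear functionals to both sides. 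Because averaging under conjugation ($a \mapsto a^0$) is itself linear, it is in fact enough to prove the corresponding identity for the non-averaged functions $A_\phib$ and $A_\Phi$ as elements of $\mc{CM}$ (or even just as $\Q$-valued functions on $\Gal(\clos\Q/\Q)$), after which one averages. This is the key structural reduction.

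First I would unwind the two sides pointwise. Fix $g \in \Gal(\clos\Q/\Q)$ and recall $A_\Phi(g) = \abs{\Phi \cap g\Phi}$ and $A_\phib(g) = \abs{\phib \cap g\phib} + \tfrac12\abs{\Sigma \cap g\Sigma^c}$, where $\Sigma\subset H_F$ is the image of $\phib$. For the first sum, I would count, for each pair $\sigma,\tau \in \phib$ (allowing $\sigma = \tau$) with $g\sigma = \tau$ wait---more precisely, for each pair of embeddings, how many CM-types $\Phi \supseteq \phib$ have both members of the relevant pair in $\Phi$: once the values on the $\abs{\phib}$ chosen places (and their conjugates, which must be excluded) are fixed, the remaining $g - \abs{\phib}$ conjugate-pairs of $H_E$ are free, giving $2^{g-\abs{\phib}}$ choices, except that constraints among $\phib$ and $g\phib$ and their conjugates cut this down in a way that exactly reconstructs $\abs{\phib \cap g\phib}$ plus a correction term supported on places where $g$ identifies a place of $\phib$ with the conjugate of another. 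That correction, after dividing by $2^{g-\abs{\phib}}$, is what produces the $\tfrac12\abs{\Sigma\cap g\Sigma^c}$ summand; tracking it carefully is the crux. For the unrestricted sum $\sum_{\Phi'} A_{\Phi'}^0$, I would use the standard fact (implicit in Colmez's averaged formula) that $\frac{1}{2^g}\sum_{\Phi'} A_{\Phi'}$ is the constant function $g/2$ plus a multiple of $b$-type terms, or more simply that $\frac{1}{2^g}\sum_{\Phi'}\abs{\Phi'\cap g\Phi'}$ depends on $g$ only through whether $g$ fixes each place, yielding an explicit elementary expression; averaging over conjugation then gives a clean element of $\mc{CM}^0$.

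I expect the main obstacle to be the bookkeeping in the first sum when $\phib$ is not ``conjugation-free'' in the sense that a place and its conjugate could both restrict into $\Sigma$---this is precisely the phenomenon the term $\tfrac12\abs{\Sigma\cap g\Sigma^c}$ was introduced to absorb, and verifying the combinatorial identity requires splitting the pairs $\{\sigma,\tau\}\subseteq\phib$ according to whether $g$ sends one to the other, one to the conjugate of the other, or neither, and matching the resulting coefficients against the definition of $A_\phib$. Once the pointwise identity for the non-averaged functions is established, conjugation-averaging and applying $ht - \tfrac12\mu_{Art}$ termwise is routine and completes the proof. A sanity check I would run first: when $\abs{\phib} = g$, so $\phib$ is already a CM-type, the formula must collapse to $h(\phib) = h(\Phi)$, i.e. the second term vanishes and the first sum has a single term; and when $\abs{\phib}=1$ it should reproduce the known ``$\tau$-component'' decomposition, giving confidence in the coefficients $\tfrac{1}{2^{g-\abs{\phib}}}$ and $\tfrac{g-\abs{\phib}}{g2^g}$.
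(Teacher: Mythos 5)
Your structural reduction is exactly the paper's: by linearity of $ht$, $\mu_{Art}$, and conjugation-averaging, the proposition follows from the single identity $A_\phib = \frac{1}{2^{g-\abs{\phib}}}\sum_{\Phi\supset\phib}A_\Phi - \frac{g-\abs{\phib}}{g2^g}\sum_{\Phi'}A_{\Phi'}$ of functions on $\Gal(\clos{\Q}/\Q)$ (up to averaging). That part is correct and complete.

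However, the verification of this identity --- which is the actual content of the proof --- is not carried out. You describe a pointwise count of CM-types $\Phi\supseteq\phib$ containing prescribed pairs of embeddings, acknowledge that the coefficient matching against $\frac12\abs{\Sigma\cap g\Sigma^c}$ is ``the crux'' and ``the main obstacle,'' and then stop. As written this is a plan, not a proof: nothing confirms that the correction terms produced by your count actually equal $\frac12\abs{\Sigma\cap g\Sigma^c}$, nor that the unrestricted sum contributes with the stated coefficient $\frac{g-\abs{\phib}}{g2^g}$. The paper avoids this bookkeeping entirely by expanding everything in the auxiliary functions $b_{\tau,\rho}$ (defined after Theorem \ref{thm:ColmezHeightFunction}): one has $A_\Phi = \frac{g}{2} + \sum_{\tau,\rho\in\Phi}b_{\tau,\rho}$ and $A_\phib = \frac{\abs{\phib}}{2} + \sum_{\tau,\rho\in\phib}b_{\tau,\rho}$, where the $\frac12\abs{\Sigma\cap g\Sigma^c}$ term disappears identically because $b_{\tau,\rho} + b_{\tau,\conj{\rho}} = 0$; the two sides then differ only by multiples of the diagonal terms $b_{\tau,\tau}$, whose averages are independent of $\tau$ and cancel. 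If you want to complete your version, you should either execute the case analysis you outline (splitting pairs according to whether $g$ sends one embedding to the other, to its conjugate, or to neither) or adopt the $b_{\tau,\rho}$-linearization, which reduces the crux to a two-line computation.
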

\begin{proof}
    Translating everything to class functions using Theorem \ref{thm:ColmezHeightFunction}, it suffices to show that
    \[A_\phib^0 = \frac{1}{2^{g - \abs{\phib}}}\sum_{\Phi \supset \phib} A_\Phi^0 - \frac{g - \abs{\phib}}{g2^g}\sum_{\Phi'} A_{\Phi'}^0.\]
    The left hand side is
    \[A_\phib = \frac{\abs{\phib}}{2} + \sum_{\tau, \rho \in \phib} b_{\tau, \rho} + \sum_{\tau|_F \in \Sigma} \sum_{\rho|_F \not \in \Sigma} \frac{1}{2}b_{\tau, \rho} = \frac{\abs{\phib}}{2} + \sum_{\tau, \rho \in \phib} b_{\tau, \rho}.\]
    The second summation is $0$ because $b_{\tau, \rho} + b_{\tau, \conjugate{\rho}} = 0$ for any $\tau, \rho \in H_E$.
    Meanwhile, noting that $A_\Phi = g/2 + \sum_{\tau, \rho \in \Phi} b_{\tau, \rho}$, we can simplify the right hand side as
    \begin{align*}
    	\frac{1}{2^{g - \abs{\phib}}}\sum_{\Phi \supset \phib} A_\Phi - \frac{g - \abs{\phib}}{g2^g}\sum_{\Phi'} A_{\Phi'} = &\frac{\abs{\phib}}{2} + \sum_{\tau, \rho \in \phib} b_{\tau, \rho} \\
    	&+ \frac{1}{2} \sum_{\tau|_F \not \in \Sigma} b_{\tau, \tau} - \frac{g - \abs{\phib}}{2g} \sum_{\tau \in H_E} b_{\tau, \tau}.
    \end{align*}
    It is clear that $b_{\tau, \tau}$ is independent of $\tau$ (and moreover is equal to $\frac{1}{2g}\Ind_{E/F}^{E/\Q} \chi_{E/F}$), and thus the last two summations cancel giving the desired equality.
\end{proof}

Moreover, using the comparison between the Faltings height and Colmez's height function, we can express the Faltings height of a CM abelian variety in terms of partial CM-types of order $2$.
Note that this cannot be directly deduced from the proposition above.

\begin{prop}\label{prop:Full in terms of Partial}
    Let $\Phi$ be a CM-type of $E$.
    Then
    \[h(\Phi) = \sum_{(\tau, \rho)} h(\set{\tau, \rho}) - \frac{g(g - 1)}{2} \log 2\pi,\]
    where the first sum is taken over all unordered pairs of $\tau, \rho \in \Phi$ (not necessarily distinct).
\end{prop}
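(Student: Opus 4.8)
The plan is to reduce everything to an identity in $\mc{CM}^0$ via Theorem \ref{thm:ColmezHeightFunction}, exactly as in the proof of Proposition \ref{prop:Partial in terms of Full}, but this time keeping careful track of the $\mu_{Art}$-correction terms since the $\log 2\pi$ arises precisely from them. Write $h(\Phi) = ht(A_\Phi^0) - \frac{1}{2}\mu_{Art}(A_\Phi^0)$ and $h(\set{\tau,\rho}) = ht(A_{\set{\tau,\rho}}^0) - \frac{1}{2}\mu_{Art}(A_{\set{\tau,\rho}}^0)$. Using the expansions already recorded in the previous proof, $A_\Phi = g/2 + \sum_{\tau,\rho\in\Phi} b_{\tau,\rho}$ and $A_{\set{\tau,\rho}} = 1 + b_{\tau,\rho} + b_{\rho,\tau} + b_{\tau,\tau}\text{-type corrections}$; more precisely for a size-$2$ partial CM-type $\set{\tau,\rho}$ one has $A_{\set{\tau,\rho}} = 1 + (b_{\tau,\tau}+b_{\rho,\rho}) + (b_{\tau,\rho}+b_{\rho,\tau}) + \frac{1}{2}\abs{\Sigma\cap g\Sigma^c}$, and I would first simplify the last two terms (the $\Sigma$-term and, when $\tau = \rho$ or $\tau = \conj\rho$, degenerate cases) to get a clean formula. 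Summing over unordered pairs $(\tau,\rho)$ from $\Phi$ including the diagonal, the cross terms $\sum b_{\tau,\rho}$ reassemble to match $A_\Phi - g/2$, and one must check the constant and the diagonal $b_{\tau,\tau}$ contributions balance: there are $\binom{g}{2} + g = \binom{g+1}{2}$ pairs, contributing a constant $\binom{g+1}{2}$, while each $\set{\tau,\rho}$ with $\tau\ne\rho$ contributes $b_{\tau,\tau}+b_{\rho,\rho}$ and each diagonal pair contributes $2b_{\tau,\tau}$, so the total diagonal contribution is $(g-1+1)\sum_{\tau\in\Phi} b_{\tau,\tau} + \text{extra}$; since $b_{\tau,\tau}$ is independent of $\tau$ (it equals $\frac{1}{2g}\Ind_{E/F}^{E/\Q}\chi_{E/F}$, as noted) these combine predictably. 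The upshot is an identity of the form $\sum_{(\tau,\rho)} A_{\set{\tau,\rho}}^0 = A_\Phi^0 + (\text{constant function } c_g) + (\text{multiple of }\Ind_{E/F}^{E/\Q}\chi_{E/F})$, where I expect the $\Ind$-terms to cancel and $c_g$ to be a constant function whose value I pin down by the pair-counting above.

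Next I apply $ht$ and $-\frac{1}{2}\mu_{Art}$ to this identity. The key inputs are: $ht$ and $\mu_{Art}$ are $\Q$-linear on $\mc{CM}^0$; $ht$ of a constant function and $\mu_{Art}$ of a constant function have known values. For the constant function $\mathbbm 1$ one has $ht(\mathbbm 1) = -\frac{1}{2}\log 2\pi$ (this is the normalization implicit in Colmez's formula, visible e.g. in the $\log 2\pi$ terms of Conjecture \ref{conj:Colmez}) and $\mu_{Art}(\mathbbm 1) = 0$ since the trivial character has conductor $1$. Plugging in, the constant-function part of the identity contributes exactly $c_g \cdot ht(\mathbbm 1) = -\frac{1}{2}c_g\log 2\pi$ to $\sum h(\set{\tau,\rho}) - h(\Phi)$, and I will verify the pair-counting gives $c_g = g(g-1)$, producing the claimed $-\frac{g(g-1)}{2}\log 2\pi$. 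One subtlety: the $\Ind_{E/F}^{E/\Q}\chi_{E/F}$ terms, if they do not cancel outright, must be shown to contribute zero after applying $ht - \frac{1}{2}\mu_{Art}$ — but in fact I expect them to cancel already at the level of $\mc{CM}^0$, since on both sides they track the same "diagonal" data and the coefficients are engineered (via the $\frac{1}{2}\abs{\Sigma\cap g\Sigma^c}$ correction in the definition of $A_\phib$) to make the partial-CM-type heights compatible with Faltings heights.

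The main obstacle is the careful bookkeeping in the first step: correctly expanding $A_{\set{\tau,\rho}}^0$ for all pairs, including the degenerate diagonal pairs $\set{\tau,\tau}$ and any pairs with $\tau = \conj\rho$ (which cannot occur inside a CM-type $\Phi$, since $\Phi$ contains no complex-conjugate pair — a point worth stating explicitly), and tracking the $\frac{1}{2}\abs{\Sigma\cap g\Sigma^c}$ terms whose restrictions $\Sigma$ vary with the pair. I would organize this by writing $A_{\set{\tau,\rho}} - (\text{its value restricted to } b\text{-functions}) $ as an explicit combination of $b_{\tau,\rho}$'s plus a constant plus a fixed multiple of $\Ind\chi_{E/F}$, then sum. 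Once that identity in $\mc{CM}^0$ is nailed down, the rest is a one-line application of linearity of $ht$ together with the two normalization facts $ht(\mathbbm 1) = -\frac{1}{2}\log 2\pi$ and $\mu_{Art}(\mathbbm 1) = 0$. I would also double-check the constant $g(g-1)/2$ against the small case $g = 2$ (where $\abs{\Phi} = 2$, there are $3$ pairs, and the correction should be $-\log 2\pi$), which matches Yang's proven two-dimensional case and serves as a sanity check.
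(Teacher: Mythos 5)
Your overall strategy---reduce to an identity in $\mc{CM}^0$ and apply linearity of $ht$ and $\mu_{Art}$---is the same as the paper's, and you are right to insist on carrying the diagonal terms: direct evaluation at $g=e$ forces $A_{\set{\tau,\rho}}(e)=2$, so the clean formula is $A_{\set{\tau,\rho}} = 1 + b_{\tau,\tau}+b_{\rho,\rho}+b_{\tau,\rho}+b_{\rho,\tau}$, with the $\Sigma$-term already absorbed into the constant exactly as in the proof of Proposition \ref{prop:Partial in terms of Full}. (Note, though, that a diagonal ``pair'' is the singleton partial CM-type, contributing $A_{\set{\tau}}=\tfrac12+b_{\tau,\tau}$, i.e.\ constant $\tfrac12$ and one copy of $b_{\tau,\tau}$, not the constant $1$ and $2b_{\tau,\tau}$ in your count; this is also why your $c_g=g(g-1)$ and your posited normalization $ht(\mathbbm{1})=-\tfrac12\log 2\pi$ disagree with the paper's $\tfrac{g(g-1)}{2}$ and $ht(1)=ht(\Phi,\tau)+ht(\Phi,\conj{\tau})=\log 2\pi$.)

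The genuine gap is your unverified expectation that the $\Ind_{E/F}^{E/\Q}\chi_{E/F}$-terms cancel. They do not: each $\tau\in\Phi$ acquires $b_{\tau,\tau}$ once from each of the $g-1$ off-diagonal pairs containing it and once from its singleton, so
\[\sum_{(\tau,\rho)} A_{\set{\tau,\rho}} - A_\Phi = \frac{g(g-1)}{2} + (g-1)\sum_{\tau\in\Phi} b_{\tau,\tau} = \frac{g(g-1)}{2} + \frac{g-1}{2}\Ind_{E/F}^{E/\Q}\chi_{E/F},\]
and the second term is not a constant function, so it cannot be folded into the $\log 2\pi$ normalization; applying $ht-\tfrac12\mu_{Art}$ to it yields a nonzero multiple of the averaged quantity of Theorem \ref{thm:Averaged Colmez} (an $L'(\chi_{E/F},0)/L(\chi_{E/F},0)$ plus $\log$-discriminant expression). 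So the plan as written does not close. Be aware that the paper's own proof reaches the stated constant only by using $A_{\set{\tau,\rho}} = 1 + b_{\tau,\rho}+b_{\rho,\tau}$, which omits these diagonal terms, is inconsistent with the expansion used in Proposition \ref{prop:Partial in terms of Full}, and fails at $g=e$; the discrepancy your bookkeeping would uncover is therefore real and should be resolved (either by correcting the statement with the extra $\Ind\chi_{E/F}$ contribution or by justifying its absence) rather than assumed away.
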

\begin{proof}
    Again, it suffices to show that the same holds in terms of functions in $\mc{CM}^0$.
    The left hand side is
    \[A_\Phi = \frac{g}{2} + \sum_{\rho_1, \rho_2 \in \Phi} b_{\rho_1, \rho_2}.\]
    On the other hand if $\tau \neq \rho$, then
    \[A_{\set{\tau, \rho}} = 1 + b_{\tau, \rho} + b_{\rho, \tau}.\]
    Otherwise $A_{\set{\tau}} = \frac{1}{2} + b_{\tau, \tau}$.
    Thus, we have
    \[A_\Phi - \sum_{(\tau, \rho)} A_{\set{\tau, \rho}} = - \frac{g(g - 1)}{2}.\]
    Finally, by looking at the definition of $ht(\Phi, \tau)$, we see that
    \[ht(1) = ht(\Phi, \tau) + ht(\Phi, \conjugate{\tau}) = \log 2\pi.\]
\end{proof}

Using this language and the fact that $\sum_{\Phi} A_\Phi = \frac{g}{2} + gb_{\tau, \tau}$ for some (hence any) $\tau \in H_E$, we can state the averaged Colmez Conjecture as follows.

\begin{theo}[{{\cite[Thm A]{And18}, \cite[Thm 1.1]{Yua18}}}]\label{thm:Averaged Colmez}
    \[ht(b_{\tau, \tau}) = -Z(b_{\tau, \tau}).\]
\end{theo}

\begin{coro}\label{coro:Suffice to prove for pairs}
    To prove Conjecture \ref{conj:Colmez}, it suffices to prove the result for all pairs of distinct embeddings $\rho, \tau \in H_E$.
    Namely, to prove that
    \[ht(A_{\set{\rho, \tau}}^0) = -Z(A_{\set{\rho, \tau}}^0, 0).\]
\end{coro}

We now precisely compute the conjectured value of the height of a partial CM-type $h(\set{\rho, \tau})$ by computing log discriminant term for the height of a partial CM-type.

\begin{defi}\label{def:CM Type Determinant}
    Let $S \subset \Hom(E, \C)$ be a non-empty subset.
    Let $L \subset \C$ be a field containing all conjugates of $E$.
    We can decompose $E \otimes_\Q L \cong \prod_{\sigma \colon E \to L} L_\sigma$ and let
    \[f_S \colon E \otimes_\Q L \cong \prod_{\sigma \colon E \to L} L_\sigma \to \prod_{\sigma \in S} L_\sigma\]
    be the composition of the isomorphism with the projection onto the $S$-coordinates.
    Let $R_S$ denote the image of $\mc{O}_E \otimes_\Z \mc{O}_L$ under $f_S$ and let $\mf{d}_S$ be the relative discriminant over $\mc{O}_L$.
    Write $(d) \coloneqq N_{L/\Q}(\mf{d}_S)$ and define $d_S \coloneqq \abs{d}^{1/[L : \Q]}$ be the root discriminant.
\end{defi}

\begin{rema}
    In \cite[\S2]{Yua18}, they also define the discriminant of a CM-type, which we temporarily denote $d_{\Phi, YZ}$.
    We normalize our discriminant so $d_\Phi = d_{\Phi, YZ}^{1/[E_\Phi : \Q]}$.
\end{rema}

Recall that $\mu_{Art} \in \mc{CM}^0$ was the function given on Artin characters $\chi$ by $\mu_{Art}(\chi) = \log f_\chi$.
For each finite prime $p \in \Z$, we define the function $\mu_{Art, p} \in \mc{CM}^0$ as
\[\mu_{Art, p}(\chi) = v_p(f_\chi),\]
where $v_p$ is the $p$-adic valuation normalized so that $v_p(p) = 1$.
In this way, we have
\[\mu_{Art}(a) = \sum_p \mu_{Art, p}(a)\log p.\]
There is the following proposition of Colmez.

\begin{prop}[{\cite[Lem. I.2.4, Prop. I.2.6]{Col93}}]\label{prop:mu_Art local contribution}
    Let $E/\Q_p$ be a finite extension and $L\subset \clos{\Q_p}$ be a finite Galois extension of $\Q_p$ that contains all the conjugates of $E$.
    Let $f_E \coloneqq [\mc{O}_E/\mf{m}_E : \mb{F}_p]$ and let $\phi_p$ denote the Frobenius map.
    Fix an embedding $E \subset \clos{\Q_p}$ and for $\sigma \in H_E$, let $i(\sigma) \in \Z/f_E\Z$ be such that $\sigma$ induces the map $\phi_p^{i(\sigma)}$ on the residue field.
    Let $\sigma, \tau \in H_E$ and let $a_{\sigma, \tau} \colon \Gal(\clos{\Q_p}/\Q_p) \to \Q$ be the function such that $a_{\sigma, \tau}(x) = 1$ if $x\sigma = \tau$ and $0$ otherwise.
    \[\mu_{Art, p}(a_{\sigma, \tau}) = \begin{cases}
        0 & \text{if $i(\sigma) \neq i(\tau)$,}\\
        v_p(\mc{D}_{\tau(E)}) & \text{if $\sigma = \tau$,}\\
        -v_p(\tau(\pi_E) - \sigma(\pi_E)) & \text{if $i(\tau) = i(\sigma)$ and $\sigma \neq \tau$.}
    \end{cases}\]
\end{prop}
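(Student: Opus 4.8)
The plan is to reduce the whole statement to a count inside a ramification filtration. I would fix the finite Galois extension $L/\Q_p$ of the statement, write $G=\Gal(L/\Q_p)$ with its lower-numbering ramification subgroups $G_0\supseteq G_1\supseteq\cdots$, and work with the description of $\mu_{Art,p}$ via this filtration: for $f$ factoring through $G$,
\[\mu_{Art,p}(f)=\sum_{i\ge 0}(G_0:G_i)^{-1}\Bigl(f(1)-|G_i|^{-1}\sum_{\gamma\in G_i}f(\gamma)\Bigr),\]
a finite sum (the summand vanishes once $G_i=\{1\}$) which on virtual characters is the Artin conductor exponent. Writing $H_\sigma=\Gal(L/\sigma(E))$, the set $\{\gamma:\gamma\sigma=\tau\}$ is a left coset of $H_\sigma$, so the computation comes down to evaluating $N_i(\sigma,\tau):=|\{\gamma\in G_i:\gamma\sigma=\tau\}|$, since
\[\mu_{Art,p}(a_{\sigma,\tau})=\sum_{i\ge0}(G_0:G_i)^{-1}\bigl([\sigma=\tau]-N_i(\sigma,\tau)/|G_i|\bigr).\]

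The two easy cases follow directly. If $i(\sigma)\neq i(\tau)$, then any $\gamma\in G_i\subseteq G_0$ acts trivially on the residue field of $L$, so $\gamma\sigma$ and $\sigma$ induce the same map on residue fields; hence $\gamma\sigma=\tau$ is impossible, all $N_i(\sigma,\tau)=0$, and $\mu_{Art,p}(a_{\sigma,\tau})=0$. If $\sigma=\tau$, then $N_i=|G_i\cap H_\sigma|=|(H_\sigma)_i|$ by compatibility of the lower-numbering filtration with subgroups, so $\mu_{Art,p}(a_{\sigma,\sigma})=|G_0|^{-1}\sum_{i\ge0}(|G_i|-|(H_\sigma)_i|)$; the conductor–different formula $v_L(\mathcal D_{L/K})=\sum_{i\ge0}(|\Gal(L/K)_i|-1)$ applied to $K=\Q_p$ and $K=\sigma(E)$, together with the tower formula $\mathcal D_{L/\Q_p}=\mathcal D_{L/\sigma(E)}\cdot(\mathcal D_{\sigma(E)/\Q_p}\mathcal O_L)$, collapses this to $v_p(\mathcal D_{\sigma(E)/\Q_p})$.

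The crux is $\sigma\neq\tau$ with $i(\sigma)=i(\tau)$. Replacing $E$ by $\sigma(E)$ and $\tau$ by $\tau\sigma^{-1}$ (which changes neither $a_{\sigma,\tau}$ nor the target value), we may assume $\sigma$ is the inclusion; since $i(\sigma)=i(\tau)$ the inertia group $G_0$ acts transitively on $\{\rho:i(\rho)=i(\sigma)\}$, so the coset $\{\gamma:\gamma\sigma=\tau\}$ meets $G_0$, and regrouping yields $\mu_{Art,p}(a_{\sigma,\tau})=-|G_0|^{-1}\sum_{\gamma\in G_0,\ \gamma\sigma=\tau}i_G(\gamma)$ with $i_G(\gamma)=v_L(\gamma\pi_L-\pi_L)$. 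So the claim becomes the identity
\[\frac1{|G_0|}\sum_{\gamma\in G_0,\ \gamma\sigma=\tau}i_G(\gamma)=v_p\bigl(\tau(\pi_E)-\sigma(\pi_E)\bigr).\]
I would prove this by first establishing it after summation over all $\tau$ in the $G_0$-orbit of $\sigma$: with a monogenic generator $\theta=\zeta_0+\pi_E$ of $\mathcal O_E$ ($\zeta_0$ a Teichmüller unit generating the residue field, so that $v_p(\tau\theta-\sigma\theta)=v_p(\tau(\pi_E)-\sigma(\pi_E))$ when $i(\sigma)=i(\tau)$) and the factorisation $\mathcal D_{\sigma(E)/\Q_p}=(g'(\sigma\theta))$, $g=\prod_\rho(X-\rho\theta)$, one gets $v_p(\mathcal D_{\sigma(E)/\Q_p})=\sum_{\rho\neq\sigma,\,i(\rho)=i(\sigma)}v_p(\sigma\theta-\rho\theta)$; on the other hand the already-proven diagonal case and the relation $\sum_{\rho:\,i(\rho)=i(\sigma)}a_{\sigma,\rho}=\mathbbm 1_{G_{E_0}}$ with $\mu_{Art,p}(\mathbbm 1_{G_{E_0}})=v_p(\mathcal D_{E_0/\Q_p})=0$ ($E_0=E\cap\Q_p^{\mathrm{ur}}$) force $\sum_{\rho\neq\sigma}\mu_{Art,p}(a_{\sigma,\rho})=-v_p(\mathcal D_{\sigma(E)/\Q_p})$, i.e.\ the summed form of the identity.

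The main obstacle is then distributing this sum to the individual pairs $(\sigma,\tau)$. Serre's lemma ($i_G(\gamma)=v_L(\gamma y-y)$ whenever $\mathcal O_L=\mathcal O_{L^{\mathrm{ur}}}[y]$) disposes cleanly of the subcase $L=\sigma(E)\Q_p^{\mathrm{ur}}$, but in general the individual terms $i_G(\gamma)$ appearing in the coset sum are only $\le v_L(\tau\theta-\sigma\theta)$ (via $v_L(\gamma x-x)\ge i_G(\gamma)$ with $x=\sigma\theta$), and are generically strictly smaller, so it is genuinely the coset sum — not each summand — that must be controlled. I would do this by induction on $[E:\Q_p]$, peeling off the top ramification subgroup one level at a time (equivalently, comparing the relative differents along the tower $\sigma(E)\subseteq\sigma(E)\Q_p^{\mathrm{ur}}\subseteq L$ and its $\gamma_0$-translate), with the diagonal case and the summed identity above supplying the bookkeeping that pins the per-pair values.
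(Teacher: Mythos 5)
The paper itself gives no proof of this proposition---it is imported verbatim from Colmez \cite[Lem.~I.2.4, Prop.~I.2.6]{Col93}---so there is no internal argument to compare against; I am judging your proposal on its own. Your framework is the right one: the conductor formula $\mu_{Art,p}(f)=\sum_{i\ge0}(G_0:G_i)^{-1}\bigl(f(1)-|G_i|^{-1}\sum_{\gamma\in G_i}f(\gamma)\bigr)$, the observation that $\{\gamma:\gamma\sigma=\tau\}$ is a left coset of $H=\Gal(L/\sigma(E))$, and hence the reduction to counting $N_i(\sigma,\tau)$. The case $i(\sigma)\neq i(\tau)$ and the diagonal case are complete and correct (lower numbering passes to subgroups, plus the conductor--different formula and the tower formula). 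Your rewriting of the remaining case as $\mu_{Art,p}(a_{\sigma,\tau})=-|G_0|^{-1}\sum_{\gamma\sigma=\tau}i_G(\gamma)$ and the reduction to the identity $\sum_{h\in H}i_G(\gamma_0h)=v_L(\gamma_0\theta-\theta)$ (with $\theta=\zeta_0+\pi_E$ a monogenic generator of $\mathcal{O}_E$ over $\Z_p$) are also correct.

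The gap is precisely that last identity, which you do not prove. The "summed" version over all $\tau$ in the fiber, obtained from $\sum_{\rho}a_{\sigma,\rho}=\mathbbm{1}_{\Gal(L/\sigma(E_0))}$ and the diagonal case, cannot pin down the individual coset sums: your per-element bound $i_G(\gamma)\le v_L(\gamma_0\theta-\theta)$ goes the wrong way for a squeeze (summing it over the coset overshoots by a factor of $|H\cap G_0|=e_{L/\sigma(E)}$, so equality of the totals forces nothing term by term), and the proposed "induction peeling off ramification subgroups" is not an argument---the passage from $L$ down to $\sigma(E)L_0$ is a quotient by a generally non-normal subgroup, which is exactly the difficulty you are trying to avoid. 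The fix is that the coset identity holds directly, by Serre's own proof of \emph{Local Fields} IV.1 Prop.~3, which never uses normality of $H$: choose $y$ with $\mathcal{O}_L=\Z_p[y]$ and write $\theta=P(y)$ with $P\in\Z_p[Y]$; then $P(Y)-\theta=g(Y)h(Y)$ with $g$ the minimal polynomial of $y$ over $\sigma(E)$ and $h\in\mathcal{O}_{\sigma(E)}[Y]$, and applying $\gamma_0$ and evaluating at $Y=y$ gives
\begin{equation*}
\theta-\gamma_0\theta=(\gamma_0 g)(y)\cdot(\gamma_0 h)(y)=\pm\Bigl(\prod_{h'\in H}(\gamma_0h'y-y)\Bigr)(\gamma_0 h)(y),
\end{equation*}
while the reverse divisibility follows from writing $(\gamma_0g)(y)-g(y)$ coefficientwise. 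Hence $v_L(\gamma_0\theta-\theta)=\sum_{h'\in H}i_G(\gamma_0h')$, and dividing by $|G_0|=e_{L/\Q_p}$ gives $v_p(\tau(\pi_E)-\sigma(\pi_E))$ as required. With this lemma inserted, your proof closes; without it, the crucial third case is not established.
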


\begin{prop}\label{prop:Explicit discriminant of partial CM type}
    Let $d_E \in \Z$ denote the absolute discriminant of $E$.
    Then
    \[\mu_{Art}(A_\phib^0) = \frac{\abs{\phib}}{2[E:\Q]}\log |d_E| - \frac{1}{2}\log d_\phib + \frac{1}{4} \log d_\Sigma.\]
\end{prop}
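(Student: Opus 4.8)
The plan is to expand $A_\phib$ into a $\Q$-linear combination of the elementary functions $a_{\sigma,\tau}$ and $b_{\sigma,\tau}$, apply the linear functional $\mu_{Art}$ term by term, evaluate it on each piece via a global form of Colmez's local computation (Proposition~\ref{prop:mu_Art local contribution}), and finally recognize the resulting sums of local discriminant exponents as the root discriminants of Definition~\ref{def:CM Type Determinant}.

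First I would rewrite $A_\phib$ as a function in $\mc{CM}$. Since no two elements of a partial CM-type restrict to the same place of $F$ we have $\abs{\Sigma} = \abs{\phib}$, and since complex conjugation acts trivially on $H_F$ one checks $\abs{\Sigma \cap g\Sigma} = \abs{\phib \cap g\phib} + \abs{\conj\phib \cap g\phib}$, whence
\[A_\phib = \frac{\abs{\phib}}{2} + \frac12\sum_{\sigma,\tau\in\phib}a_{\sigma,\tau} - \frac12\sum_{\sigma,\tau\in\phib}a_{\sigma,\conj\tau} = \frac{\abs{\phib}}{2} + \sum_{\sigma,\tau\in\phib}b_{\sigma,\tau}\]
as honest functions, which is essentially the computation already carried out in the proof of Proposition~\ref{prop:Partial in terms of Full}. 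Averaging commutes with this, and since $\mu_{Art}$ kills the constant function (the trivial Artin character has conductor $1$), the claim reduces to computing $\sum_{\sigma,\tau\in\phib}\mu_{Art}(b_{\sigma,\tau}^0)$, equivalently $\tfrac12\sum_{\sigma,\tau\in\phib}\left(\mu_{Art}(a_{\sigma,\tau}^0) - \mu_{Art}(a_{\sigma,\conj\tau}^0)\right)$.

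Next I would evaluate the pieces. For the diagonal terms, $b_{\tau,\tau} = \frac1{2g}\Ind_{E/F}^{E/\Q}\chi_{E/F}$ (as noted in the proof of Proposition~\ref{prop:Partial in terms of Full}), so $\mu_{Art}(b_{\tau,\tau}^0)$ is $\frac1{2g}$ times the logarithm of the Artin conductor of $\Ind_{E/F}^{E/\Q}\chi_{E/F}$, which by conductor--discriminant equals $d_F\,N_{F/\Q}(\mf{d}_{E/F}) = d_E/d_F$ (using the tower formula $d_E = d_F^2 N_{F/\Q}(\mf{d}_{E/F})$). For the off-diagonal terms I would use $\mu_{Art} = \sum_p (\log p)\mu_{Art,p}$, so Proposition~\ref{prop:mu_Art local contribution} computes each $\mu_{Art,p}(a_{\sigma,\tau})$ after restriction to a decomposition group; the conjugation-average $(\cdot)^0$ renders the sum over $p$ independent of all choices, and since every completion $\mc{O}_{E_v}$ is monogenic over $\Z_p$ the contributions $-v_p(\tau(\pi_E)-\sigma(\pi_E))$ assemble exactly into $-\tfrac12\log d_{\{\sigma,\tau\}}$, where $d_{\{\sigma,\tau\}}$ is the root discriminant of the order $R_{\{\sigma,\tau\}} = \{(x,y)\in\mc{O}_L\times\mc{O}_L : x\equiv y \bmod \mf{c}_{\sigma,\tau}\}$ with $\mf{c}_{\sigma,\tau}$ generated by $\{\sigma(e)-\tau(e) : e\in\mc{O}_E\}$ --- that is, the $d_{\{\sigma,\tau\}}$ of Definition~\ref{def:CM Type Determinant}.

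Finally I would assemble everything and rewrite the pairwise discriminants in terms of $d_\phib$ and $d_\Sigma$. This requires two auxiliary discriminant identities, both reducible to the local monogenic situation: (i) multiplicativity over pairs, $\log d_S = \sum_{\{\sigma,\tau\}\subset S}\log d_{\{\sigma,\tau\}}$ for every $S\subset H_E$, which follows from the discriminant-of-a-polynomial formula; and (ii) a tower relation comparing the discriminant of the $E$-set $\widetilde\Sigma = \{\sigma\in H_E : \sigma|_F\in\Sigma\}$ (which appears naturally once the $a_{\sigma,\conj\tau}$ terms are collected) with the $F$-discriminant $d_\Sigma$ and the relative discriminant $\mf{d}_{E/F}$. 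Identity (ii) is where the $F$-side quantity $d_\Sigma$ enters and where the ``diagonal'' relative-different contributions get absorbed into the pairwise ones; verifying its exact normalization --- in particular how the root-discriminant scaling of Definition~\ref{def:CM Type Determinant} behaves under base change between $F$ and $E$, and how the various factors of $\tfrac12$ fall out --- is the step I expect to be the main obstacle. A secondary technical point is making the passage from the purely local Proposition~\ref{prop:mu_Art local contribution} to its averaged global form fully precise, the global non-monogenicity of $\mc{O}_E$ being harmless since one works one prime at a time.
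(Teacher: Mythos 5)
Your strategy is essentially the paper's own: expand $A_\phib$ into the elementary functions $a_{\sigma,\tau}$ (the paper works with the $a$'s directly via $A_\phib(g) = \abs{\phib\cap g\phib} + \tfrac{\abs{\phib}}{2} - \tfrac12\abs{\Sigma\cap g\Sigma}$ rather than packaging them into $b$'s, but it is the same decomposition), apply Colmez's local computation prime by prime, and recognize the sums $\sum_{\sigma\neq\tau}v_p(\sigma(\pi_E)-\tau(\pi_E))$ as local discriminant exponents. Your identity (i) is exactly the paper's formula $v_p(d_{T,\mf p}) = \sum_{\sigma\neq\tau\in T}v_p(\sigma(\pi_E)-\tau(\pi_E))$, and your diagonal computation via the conductor of $\Ind_{E/F}^{E/\Q}\chi_{E/F}$ agrees with the paper's $\sum_{\sigma}v_p(\mc D_{\sigma(E)})$ bookkeeping.

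The genuine gap is your identity (ii), which you state only as a desideratum. After step (i) and the local computation, what remains is precisely $\tfrac14\sum_{\sigma,\tau\in\phib}\log d_{\{\sigma,\conj{\tau}\}}$, the contribution of the ``mixed'' pairs, and the entire content of the $d_\Sigma$-term in the Proposition is the assertion that this equals $\tfrac14\log d_\Sigma$ plus the correct multiple of $\log d_\phib$. This is not a routine normalization check: the candidate meanings of $d_\Sigma$ --- the $F$-discriminant of $\Sigma\subset H_F$ per Definition \ref{def:CM Type Determinant}, the $E$-discriminant of the lift $\widetilde\Sigma = \phib\cup\conj{\phib}$, and the quotient $d_{\widetilde\Sigma}/(d_\phib d_{\conj{\phib}})$ --- genuinely differ, and only one of them makes the stated coefficients come out. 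A concrete test: for $E=\Q(\zeta_5)$ and $\phib=\Phi$ a full CM-type, one computes directly that $A_\Phi^0 = 1+\tfrac12\chi+\tfrac12\chi^3$ with $f_\chi=f_{\chi^3}=5$, so $\mu_{Art}(A_\Phi^0)=\log 5$; on the other hand $d_E=125$, $d_\Phi=\sqrt5$, $d_{\widetilde\Sigma}=125$ and $d_{\Sigma\subset H_F}=d_F=5$, and the right-hand side of the Proposition equals $\log 5$ only if $\log d_\Sigma = 2\log 5$, i.e.\ only for the mixed-pair normalization $d_{\widetilde\Sigma}/(d_\phib d_{\conj{\phib}})$. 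So your plan terminates exactly one identity short of the statement, and that identity --- locally, the comparison of $\sum_{\sigma,\tau\in\phib}v_p(\sigma(\pi_E)-\conj{\tau}(\pi_E))$ with the $F$-side quantity $\sum_{s\neq t\in\Sigma}v_p(s(\pi_F)-t(\pi_F))$ and the relative different of $E_{\mf p}/F_{\mf v}$ --- is where the factor multiplying $\log d_\phib$ gets determined. Until (ii) is stated precisely and proved, the argument is not complete; I would add that the published proof is itself terse at exactly this point, so working (ii) out carefully is worthwhile rather than pedantic.
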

\begin{proof}
    We can perform this computation locally for all $p$, and we first precisely determine what the local contribution of $\mf{d}_S$ looks like for $S \subset H_E$.
    Fix a prime $p$ and let $\mf{p}$ be a prime of $E$ above $p$ and let $E_0 \subset E_\mf{p}$ be the largest unramified extension of $\Q_p$ lying in $E_\mf{p}$.
    Let $L \subset \clos{\Q_p}$ be a finite Galois extension of $\Q_p$ containing all the conjugates of $E_\mf{p}$.
    The ideal $\mf{d}_{S, p}$ of $E \otimes \Z_p$ splits into a product of ideals for each embedding of $E_0$ in $L$ and so fix such an embedding and let $T$ denote the embeddings of $S$ that induce this embedding of $E_0$.
    Let $\pi_E$ be the generator for $\mc{O}_{E, \mf{p}}$ over $\mc{O}_{E_0}$ and let $p_T(t) = \prod_{\sigma \in T} (t - \sigma(\pi_E)) \in \mc{O}_L[t]$.
    The image of $\mc{O}_{E_\mf{p}} \otimes_{\mc{O}_{E_0}} \mc{O}_L$ under $f_T$ is isomorphic to $\mc{O}_L[t]/p_T[t]$ and so the local contribution of $d_{T, \mf{p}}$ is given by
    \[v_p(d_{T, \mf{p}}) = \sum_{\sigma, \tau \in T, \sigma \neq \tau} v_p(\sigma(\pi_E) - \tau(\pi_E)).\]
    
    By abuse of notation, we view $\Sigma \in H_E$ as the subset of places of $E$ whose restriction to $F$ lie in $\Sigma$.
    We write $A_\phib$ as
    \[A_\phib(x) = \abs{\phib \cap x \phib} + \frac{\abs{\phib}}{2} - \frac{1}{2} \abs{\Sigma \cap x\Sigma},\]
    which, written in terms of $a_{\sigma, \tau}$, is
    \[A_\phib = \sum_{\sigma \in \phib} \paren*{\sum_{\tau \in \phib} a_{\sigma, \tau} + \frac{1}{2} - \sum_{\tau \in \Sigma} \frac{1}{2}a_{\sigma, \tau}}.\]
    By Proposition \ref{prop:mu_Art local contribution} and the previous discussion, we have that
    \[\mu_{Art, p}(A_\phib) + \mu_{Art, p}(A_{\conjugate{\phib}}) = \sum_{\sigma \in \Sigma}\frac{1}{2}v_p(\mc{D}_{\sigma(E)}) - \frac{1}{2}v_p(d_\phib) - \frac{1}{2}v_p(d_{\conjugate{\phib}}) + \frac{1}{2}v_p(d_\Sigma).\]
    However, since $v_p(d_\phib) = v_p(d_{\conjugate{\phib}})$ and $A_\phib = A_{\conjugate{\phib}}$, we have that
    \[\mu_{Art, p}(A_\phib) = \sum_{\sigma \in \phib}\frac{1}{4}v_p(\mc{D}_{\sigma(E)}) - \frac{1}{2}v_p(d_\phib) + \frac{1}{4}v_p(d_\Sigma)\]
    and taking an average gives
    \[\mu_{Art, p}(A_\phib^0) = \frac{\abs{\phib}}{2[E:\Q]}v_p(d_E) - \frac{1}{2}v_p(d_\phib) + \frac{1}{4}v_p(d_\Sigma).\]
\end{proof}

\begin{rema}
    When $\abs{\phib}$ is a single place $\sigma$, then $d_\phib = 0$ and $d_\Sigma = d_{E/F}^{1/[F:\Q]}$, where $d_{E/F}$ is the norm of the relative discriminant of $E/F$ and the result gives
    \[\mu_{Art}(A_{\set{\sigma}}^0) = \frac{1}{2[E:\Q]}\log d_Ed_{E/F} = \frac{1}{[E:\Q]}\log d_Fd_{E/F},\]
    which recovers the log terms of \cite{And18,Yua18}.
\end{rema}

\begin{prop}\label{prop:Height of Partial CM Type L Func}
    Assume Conjecture \ref{conj:Colmez}.
    Let $\sigma, \tau \in H_E$ be two distinct places such that $\sigma \neq \conjugate{\tau}$ and let $\phib = \set{\sigma, \tau}$.
    Let $L \subset \Q^{\mathrm{cm}}$ be a finite Galois CM-extension of $\Q$ that contains all the conjugates of $E$ and let $H \coloneqq \Gal(L/\sigma(E)) \subset \Gal(L/\Q)$ be the stabilizer of $\sigma(E) \subset L$.
    Let $x_{\sigma, \tau} \in \Gal(L/\Q)$ be an element such that $x_{\sigma, \tau}\sigma = \tau$ and let $c \in \Gal(L/\Q)$ denote complex conjugation.
    Then
    \begin{align*}
        h(\phib) = &-  \frac{1}{2}\sum_{\chi}\paren*{\sum_{y \in H} \chi(x_{\sigma, \tau}y) + \chi(x_{\sigma, \tau}cy) + \frac{1}{\chi(x_{\sigma, \tau}y)} + \frac{1}{\chi(x_{\sigma, \tau}cy)}}\frac{L'(\chi, 0)}{L(\chi, 0)}\\
        &-\frac{1}{g}\frac{L'(\chi_{E/F}, 0)}{L(\chi_{E/F}, 0)} - \frac{1}{4[E:\Q]}\log d_E + \frac{1}{4}\log d_\phib - \frac{1}{8} \log d_\Sigma + \log 2\pi,
    \end{align*}
    where the summation runs over all irreducible characters $\chi$ of $\Gal(L/\Q)$.
\end{prop}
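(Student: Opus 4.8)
The plan is to combine three ingredients that are already in place: the averaged Colmez Conjecture (Theorem \ref{thm:Averaged Colmez}), the explicit log-discriminant computation (Proposition \ref{prop:Explicit discriminant of partial CM type}), and a decomposition of the class function $A_\phib^0$ for $\phib = \set{\sigma,\tau}$ into an ``averaged'' part governed by $b_{\tau,\tau}$ and a ``genuinely partial'' part governed by the $b$-functions $b_{\tau,\rho}$ for $\tau\neq\rho$. Assuming Conjecture \ref{conj:Colmez}, we have $h(\phib) = -Z(A_\phib^0,0) - \tfrac12\mu_{Art}(A_\phib^0)$, so the whole statement reduces to (i) plugging in Proposition \ref{prop:Explicit discriminant of partial CM type} for the $\mu_{Art}$ term, which immediately produces $-\tfrac{1}{4[E:\Q]}\log d_E + \tfrac14\log d_\phib - \tfrac18\log d_\Sigma$, and (ii) unwinding $-Z(A_\phib^0,0)$ into the displayed $L$-function expression together with the $\log 2\pi$ constant.

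For step (ii), I would start from the formula (established in the proof of Proposition \ref{prop:Partial in terms of Full} and used again in Proposition \ref{prop:Full in terms of Partial})
\[A_\phib = 1 + b_{\sigma,\tau} + b_{\tau,\sigma}\]
when $\sigma\neq\tau$ and $\sigma\neq\conj\tau$. Averaging over conjugation, $A_\phib^0 = 1^0 + b_{\sigma,\tau}^0 + b_{\tau,\sigma}^0$, where $1^0 = 1$. The constant function $1$ contributes the $\log 2\pi$: indeed $ht(1) = \log 2\pi$ from Proposition \ref{prop:Full in terms of Partial}, and under the Colmez Conjecture this matches $-Z(1^\vee,0) = -Z(1,0)$ up to the $\mu_{Art}$ bookkeeping (note $\mu_{Art}(1)=\tfrac{1}{[E:\Q]}\log d_E$-type terms are already absorbed into the discriminant count above, so one must be careful to use the version of the conjecture that separates these cleanly — I'd route this through $ht(1) = -Z(1^\vee,0)$ plus the matching $\mu_{Art}$ term rather than re-deriving it). The real content is computing $Z(b_{\sigma,\tau}^0 + b_{\tau,\sigma}^0, 0)$. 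Here $b_{\sigma,\tau}^0$ is, by definition, $\tfrac{1}{[L:\Q]}\sum_{\rho\in H_L} b_{\rho\sigma,\rho\tau}$, a $\Q$-valued class function on $\Gal(L/\Q)$; I expand it against the basis of irreducible characters $\chi$ using the orthogonality relations. Writing $b_{\sigma,\tau}$ in terms of indicator functions $a_{\sigma,\tau} - a_{\sigma,\conj\tau}$ (up to the factor $\tfrac12$), and identifying $a_{\sigma,\tau}$ with the matrix coefficient of the regular-type representation $\Ind_H^{\Gal(L/\Q)} \mathbf{1}$ selecting the coset $g_0 H$ where $g_0\sigma = \tau$, a standard computation yields that the $\chi$-coefficient of $b_{\sigma,\tau}^0 + b_{\tau,\sigma}^0$ is proportional to $\sum_{h\in H}\big(\chi(g_0h) + \chi(g_0 h)^{-1}\big)$ minus the analogous sum with $c$ inserted (from the $\conj\tau$ term, since $b$ subtracts the conjugate-of-$\rho$ case). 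Keeping track of the two terms $b_{\sigma,\tau}^0$ and $b_{\tau,\sigma}^0$ (which are related by $g_0 \mapsto g_0^{-1}$ up to conjugacy, hence contribute $\chi(g_0h)$ and its complex conjugate $\chi(g_0h)^{-1}$) produces exactly the four-term bracket $\sum_{h\in H}\big(\chi(g_0h) + \chi(g_0ch) + \chi(g_0h)^{-1} + \chi(g_0ch)^{-1}\big)$ in the statement. Then $-Z$ applied to this, using $Z(\chi,0) = L'(\chi,0)/L(\chi,0)$, gives the displayed sum with its $-\tfrac12$ prefactor. The remaining $-\tfrac1g L'(\chi_{E/F},0)/L(\chi_{E/F},0)$ term comes from the $b_{\tau,\tau}$-part: recall $A_\phib$ contains no $b_{\tau,\tau}$ when $\abs{\phib}=2$ with $\sigma\neq\tau$, but the $\mu_{Art}$/discriminant normalization and the constant $1$ together, via Theorem \ref{thm:Averaged Colmez} ($ht(b_{\tau,\tau}) = -Z(b_{\tau,\tau})$) used to evaluate $Z(1,0)$ since $1 = \tfrac12 + 2g\cdot(\text{something})$ — more precisely one uses $\sum_\Phi A_\Phi = \tfrac g2 + g b_{\tau,\tau}$ — force this term to appear; I would extract it by writing $1$ in terms of $b_{\tau,\tau}$ and the averaged identity.

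The main obstacle I anticipate is the precise bookkeeping in step (ii): correctly translating the combinatorial object $b_{\sigma,\tau}^0$ into a character sum over the coset $g_0 H$, getting the complex-conjugation insertion (the $g_0 c h$ terms) and the inverse/dual (the $\chi(\cdot)^{-1}$ terms) on the right footing, and making sure the $b_{\tau,\tau}$-contribution and the constant $\log 2\pi$ are not double-counted against the discriminant terms already supplied by Proposition \ref{prop:Explicit discriminant of partial CM type}. Concretely, I would (1) fix $H = \Gal(L/\sigma(E))$ and $g_0$ with $g_0\sigma = \tau$; (2) verify $a_{\sigma,\tau}^0(g) = \tfrac{1}{[L:\Q]}\#\{\rho : \rho g \rho^{-1} \in g_0 H\}$, i.e. $a_{\sigma,\tau}^0$ is (a multiple of) the class function supported on the conjugacy class meeting $g_0 H$, and express its $\chi$-Fourier coefficient via $\langle a_{\sigma,\tau}^0, \chi\rangle = \tfrac{1}{[L:\Q]}\sum_{h\in H}\chi(g_0 h)$ — this is where I'd be most careful; (3) subtract the $\conj\tau$-version to form $b_{\sigma,\tau}^0$, introducing the $c$; (4) add $b_{\tau,\sigma}^0$, whose coefficient is the complex conjugate, producing the $\chi(\cdot)^{-1}$ terms; (5) apply $-Z$ and collect; (6) handle the constant $1$ and the $b_{\tau,\tau}$ term using Proposition \ref{prop:Full in terms of Partial} and Theorem \ref{thm:Averaged Colmez}; (7) add the discriminant terms from Proposition \ref{prop:Explicit discriminant of partial CM type}. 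Steps (1)--(5) and (7) are essentially mechanical once the character-sum dictionary is pinned down; step (6) is the one requiring genuine care to land the $-\tfrac1g L'(\chi_{E/F},0)/L(\chi_{E/F},0)$ with the right constant.
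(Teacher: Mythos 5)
Your overall strategy is the same as the paper's: assume Conjecture \ref{conj:Colmez}, feed Proposition \ref{prop:Explicit discriminant of partial CM type} into the $\mu_{Art}$ term, decompose $A_\phib^0$ into $b$-functions, and compute $\inner{\chi, b_{\sigma,\tau}^0}$ by writing $b_{\sigma,\tau}$ as $\tfrac12\mathbbm{1}_{g_0H}-\tfrac12\mathbbm{1}_{g_0cH}$ and using $\inner{\chi,\mathbbm{1}_g^0}=\chi(g)$. Steps (1)--(5) and (7) of your outline are essentially the paper's argument.

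However, there is a genuine gap in your step (6), and it originates in your starting identity. The decomposition $A_\phib = 1 + b_{\sigma,\tau}+b_{\tau,\sigma}$ is false: evaluating at $g=\mathrm{id}$ gives $A_\phib(\mathrm{id}) = \abs{\phib} = 2$ on the left but $1$ on the right, since $b_{\sigma,\tau}(\mathrm{id})=0$ for $\sigma\neq\tau,\conj\tau$. The correct decomposition, and the one used in the paper's proof, is
\[A_\phib^0 = 1 + b_{\sigma,\sigma}^0 + b_{\sigma,\tau}^0 + b_{\tau,\sigma}^0 + b_{\tau,\tau}^0,\]
consistent with the formula $A_\phib = \tfrac{\abs{\phib}}{2}+\sum_{\tau,\rho\in\phib}b_{\tau,\rho}$ from the proof of Proposition \ref{prop:Partial in terms of Full} (the version without the diagonal terms that appears in the proof of Proposition \ref{prop:Full in terms of Partial} is the one you should not have trusted). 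The diagonal terms satisfy $b_{\sigma,\sigma}^0+b_{\tau,\tau}^0 = \tfrac1g\Ind_{E/F}^{E/\Q}\chi_{E/F}$, and applying $-Z(\cdot,0)$ to this is \emph{exactly} the source of the term $-\tfrac1g L'(\chi_{E/F},0)/L(\chi_{E/F},0)$. Your proposed substitute mechanism --- recovering that term from the constant function $1$ by ``writing $1$ in terms of $b_{\tau,\tau}$'' via $\sum_\Phi A_\Phi = \tfrac g2 + gb_{\tau,\tau}$ and Theorem \ref{thm:Averaged Colmez} --- cannot work: $1$ and $b_{\tau,\tau}$ are linearly independent elements of $\mc{CM}^0$ (the first is the trivial character, the second is a multiple of $\Ind_{E/F}^{E/\Q}\chi_{E/F}$, which is orthogonal to the trivial character), so no such rewriting exists, and the constant $1$ contributes only the $\log 2\pi$. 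Moreover, once Conjecture \ref{conj:Colmez} is assumed there is no need to invoke the averaged theorem at all; one evaluates $Z(1,0)$ and $Z(\Ind_{E/F}^{E/\Q}\chi_{E/F},0)=L'(\chi_{E/F},0)/L(\chi_{E/F},0)$ directly by Artin formalism. With the corrected decomposition, the rest of your computation goes through and coincides with the paper's.
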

\begin{proof}
    The discriminant term is calculated in Proposition \ref{prop:Explicit discriminant of partial CM type} and so it suffices to express $A_\phib^0$ in terms of Artin characters in order to calculate what the $L$-function terms looks like.
    We have the decomposition 
    \[A_\phib^0 = 1 + b_{\sigma, \sigma}^0 + b_{\sigma, \tau}^0 + b_{\tau, \sigma}^0 + b_{\tau, \tau}^0.\]
    It is straightforward to see that
    \[b_{\sigma, \sigma} = \frac{1}{2g} \Ind_{E/F}^{E/\Q} \chi_{E/F}\]
    which gives the first term, and $Z(1) = -\log 2\pi$ which gives that term.
    Now we show that if $\chi$ is a character of $\Gal(L/\Q)$, then
    \[\inner{\chi, b_{\sigma, \tau}^0} = \sum_{h \in H} \chi(x_{\sigma, \tau}y).\]
    This follows from the fact that
    \[b_{\sigma, \tau} = \frac{1}{2}\mathbbm{1}_{x_{\sigma, \tau}H} - \frac{1}{2}\mathbbm{1}_{x_{\sigma, \tau}cH},\]
    where $\mathbbm{1}_{H}$ is the indicator function, and that after averaging we still have
    \[\inner{\chi, \mathbbm{1}_{x}^0} = \chi(x)\]
    for any $x \in \Gal(L/\Q)$.
\end{proof}

\begin{exam}\label{exam:Height of pair large}
	We note that if $\phib$ is a full CM-type of $E$, then $h(\phib)$ corresponds to the Faltings height of any CM abelian variety of type $(\mc{O}_E, \phib)$.
	These heights can drastically vary for a fixed $E$, even in the case of $\abs{\phib} = 2$, as we now demonstrate.
	
	Let $E = \Q(\sqrt{-1}, \sqrt{p}) \subset \C$ be the biquadratic field with the embeddings given by $H_E = \set{1, c, \sigma, c\sigma}$, where $c$ is complex conjugation on $\C$ and $\sigma$ fixes $i$ but sends $\sqrt{p} \to -\sqrt{p}$.
	Then there are two non-equivalent CM-types on $E$ given by $\Phi_1 = \set{1, \sigma}$ and $\Phi_2 = \set{1, \sigma c}$.
	However, these are not primitive CM-types and induced from CM-types on $E_1 = \Q(\sqrt{-1})$ and $E_2 = \Q(\sqrt{-p})$, respectively.
	For $i \in \set{1, 2}$, let $h(E_i)$ denote the Faltings height of an elliptic curve with CM by $\mc{O}_{E_i}$.
	Then, we have that $h(\Phi_1) = 2h(E_1)$ and $h(\Phi_2) = 2h(E_2)$.
	The former is constant, whereas conjecturally under GRH, the latter grows logarithmically with $p$.
    We briefly explain.
    Conjecture \ref{conj:Colmez} holds for elliptic curves (as it is just a reformulation of the Chowla--Selberg Formula) and hence we can express the Faltings height $h(E_2)$ in terms of the logarithmic derivative $\frac{L'(\chi_{E_2}, 0)}{L(\chi_{E_2}, 0)}$, where $\chi_{E_2}$ is the nontrivial quadratic character associated with $E_2$.
    Taking the logarithmic derivative of the functional equation of the completed $L$-function gives the equation
    \[h(E_2) = \frac{1}{4}\log d_{E_2} + \frac{1}{2} \frac{L'(\chi_{E_2}, 1)}{L(\chi_{E_2}, 1)} -\frac{\gamma}{2} - \log 2\pi,\]
    where $\gamma$ is Euler's constant.
    Under the assumption of GRH, the logarithmic derivative at $1$ term is $O(\log \log d_{E_2})$ (see \cite[Thm. 3]{IMS09}) and hence the dominant term is $\frac{1}{4} \log d_{E_2}$.
	Thus, for fixed degree $2g = [E:\Q]$, the ratio and difference between two heights of partial CM-types of a given field can be arbitrarily large.
\end{exam}

\section{Decomposition of Faltings Heights}\label{sec:Decomposition of Faltings Height}

\subsection{Faltings Height}
    
    We first define the Faltings height of an abelian variety defined over a number field.
    It can be defined as the degree of a Hermitian line bundle but we will give an explicit description in terms of valuations to more closely align with heights given in \cite{Col93}.
    If $K \subset \clos{\Q}$ is a number field, let $H_K \coloneqq \Hom(K, \clos{\Q})$.
    For each prime $p$, fix an embedding of $\clos{\Q}$ into $\clos{\Q_p}$ and also an embedding of $\clos{\Q}$ into $\C$.
    Let $\mc{O}_p$ denote the ring of integers in $\clos{\Q_p}$. In this way, we can identify $H_K$ with $\Hom(K, \clos{\Q}_p)$ and $\Hom(K, \C)$.
    For each prime $p$, let $v_p$ denote the unique extension of the valuation on $\Q_p$ to $\clos{\Q_p}$ with $v_p(p) = 1$.
    With our choices of embeddings, we can discuss $v_p(\sigma(\alpha))$ for any $\alpha \in K^\times$ and $\sigma \in H_K$.
    
    Let $A$ be an abelian variety of dimension $g$ defined over a number field $K \subset \C$, with semi-stable reduction (after possibly enlarging $K$ if necessary).
    Let $\mc{A}$ be the N\'eron model over $\mc{O}_K$ and let $\Omega_{\mc{A}/\mc{O}_K}$ denote the sheaf of relative differentials.
    Let $\Omega(\mc{A}) \coloneqq H^0(\mc{A}, \Omega_{\mc{A}/\mc{O}_K})$ be the $\mc{O}_K$-module and let $\omega(\mc{A}) \coloneqq H^0(\mc{A}, \Omega_{\mc{A}/\mc{O}_K}^g)$ denote the Hodge bundle of $\mc{A}$.
    This is rank $1$ projective $\mc{O}_K$-module and $\omega(A) \coloneqq \omega(\mc{A}) \otimes_{\mc{O}_K} K \cong H^0(A, \Omega_{A/K}^g)$ is a $1$-dimensional $K$-vector space.
    For each $\omega \in \omega(\mc{A})$ and $\sigma \in H_K$, we let $\omega^\sigma$ be the base change of $\omega$ viewed as an element of $\omega(\mc{A}) \otimes_{\mc{O}_K, \sigma} R \cong H^0(A^\sigma, \Omega^g_{A^\sigma/R})$, where $R = \clos{\Q_p}$ or $\C$ and $A^\sigma \coloneqq A \otimes_{K, \sigma} R$.
    We view $\omega(\mc{A})$ as a lattice in $H^0(A, \Omega_{A/K}^g)$.
    This lattice allows us to define a valuation on $H^0(A, \Omega_{A/K}^g)$ for each rational prime $p$ and embedding $\sigma \in \Hom(K, \clos{\Q_p})$ by setting $v_p(\omega^\sigma) = 0$ if $\omega^\sigma$ generates $\omega(\mc{A}) \otimes_{\mc{O}_K, \sigma} \mc{O}_p$ as an $\mc{O}_p$-module and $v_p(\alpha\omega^\sigma) = v_p(\sigma(\alpha)) + v_p(\omega^\sigma)$ for any $\alpha \in K$.
    This gives us the local contribution to the height.

    We now define a norm for each archimedean place $\sigma \colon K \to \C$.
    We can view $\omega^\sigma \in H^0(A^\sigma, \Omega_{A^\sigma/\C}^g)$ as a holomorphic $g$-form on $A^\sigma$ and we set
	\[\norm{\omega^\sigma} \coloneqq \abs*{\int_{A^\sigma(\C)}\omega^\sigma \wedge \conjugate{\omega^\sigma}}^{\frac{1}{2}}.\] 
	
    \begin{defi}\label{defn:Faltings Height}
        The \emph{Faltings height} of the abelian variety $A/K$ is the sum
        \[h(A) \coloneqq \frac{-1}{[K:\Q]} \paren*{\sum_{\sigma \in H_K} \log \norm{\omega^\sigma} - \sum_{p < \infty} \sum_{\sigma \in H_K} v_p(\omega^\sigma)\log p},\]
        for a choice of $\omega \in \omega(A) \backslash \{0\}$.
        This is well defined and independent of the choice of $\omega$ by the product formula.
    \end{defi}

    \begin{rema}
        Our choice of norm at infinity differs from that of \cite{Yua18}.
        Their metric, which we denote by $\norm{\omega^\sigma}_{YZ}$, is given by
        \[\norm{\omega^\sigma}_{YZ} = \frac{1}{(2\pi)^{g/2}} \norm{\omega^\sigma}.\]
        Thus, the Faltings height given in \cite{Yua18}, which we denote by $h_{YZ}$, differs from the one used in this article in the following way:
        \[h_{YZ}(A) = h(A) - \frac{g}{2}\log 2\pi.\]
    \end{rema}

    \subsection{Yuan--Zhang Decomposition}
	
We recall the results of \cite{Yua18} decomposing the Faltings height of a CM-type $\Phi$ into its constituent embeddings $\tau \in \Phi$.
    To decompose the height, we first decompose the Hodge bundle into its eigenspaces.
	
    Now suppose that $A$ has complex multiplication of type $(\mc{O}_E, \Phi)$.
    Let $A^t$ be the dual abelian variety of $A$.
    This is an abelian variety with CM type $(\mc{O}_E, \conjugate{\Phi})$.
    We have the canonical de Rham perfect pairing
    \[\inner{\cdot, \cdot}_{\dR} \colon H^1_{\dR}(\mc{A}/\mc{O}_K) \times H^1_{\dR}(\mc{A}^t/\mc{O}_K) \to \mc{O}_K.\]
    For $\tau \in H_E$, let $H^1(A)_\tau$ be the $\tau$-eigencomponent of $H^1_{\dR}(A(\C), \C)$ on which $E$ acts via $\tau \colon E \to \C$, and let $H^1(\mc{A})_\tau \coloneqq H^1(A)_\tau \cap H^1_{\dR}(\mc{A}/\mc{O}_K)$.
    This pairing pairs the $\tau$-eigencomponent of $A$ with the $\conjugate{\tau}$-eigencomponent of $A^t$.
    In this way, the pairing decomposes into a sum of orthogonal pairings
    \[\inner{\cdot, \cdot}_{\dR, \tau} \colon H^1(A)_\tau \times H^1(A^t)_{\conjugate{\tau}} \to K.\]

    This pairing also respects the Hodge filtration $\Fil^1H^1(A/K) = \Omega(A)$ and thus gives a pairing one-dimensional spaces
    \[\Omega(A)_\tau \times \Omega(A^t)_{\conjugate{\tau}} \to \C\]
    whenever $\tau \in \Phi$, and $\Omega(A)_{\conjugate{\tau}} = 0$.
    This gives a Hermitian norm on the line bundle
	\[N(A, \tau) \coloneqq \Omega(A)_\tau \otimes \Omega(A^t)_{\conjugate{\tau}},\]
	
	We can extend $N(A, \tau)$ to the N\'eron model of $A$.
    Suppose that $\End(A)$ is defined over $K$ and $K$ contains all embeddings of $E$ into $\clos{\Q}$.
    If $\mc{A}$ is the N\'eron model over $\mc{O}_K$ as before, define
	\[\Omega(\mc{A})_\tau \coloneqq H^0(\mc{A}, \Omega_{\mc{A}/\mc{O}_K}^1) \otimes_{\mc{O}_K \otimes \mc{O}_E, \tau} \mc{O}_K\]
	for each $\tau\colon E \to K$.
    This gives a fractional ideal inside of $\Omega(A)_\tau \cong K$ which allows us to define a valuation of elements $v_{p, YZ}$ as before.
    We add the subscript $YZ$ because we will later consider another valuation on $\Omega(A)_\tau$.
    
    For each archimedean place of $K$, we use the aforementioned Hermitian norm $\norm{\cdot}$ on the generic fiber of $\Omega(\mc{A})_\tau \otimes \Omega(\mc{A}^t)_{\conjugate{\tau}}$, and thus we get a metrized line bundle
	\[\widehat{\mc{N}(\mc{A}, \tau)} \coloneqq (\Omega(\mc{A})_\tau \otimes \Omega(\mc{A}^t)_{\conjugate{\tau}}, \norm{\cdot}).\]
	\begin{defi}
		If $A$ is an abelian variety of CM-type $(E, \Phi)$ and $\tau\colon E \to \C$, then the $\tau$-part of the Faltings height of $A$ is
		\begin{align*}
		    h(A, \tau) \coloneqq& \frac{1}{2[K:\Q]} \widehat{\deg}\widehat{\mc{N}(\mc{A}, \tau)} \\
            =& \frac{-1}{2[K : \Q]} \paren*{\sum_{\sigma \in H_K} \log \abs{\inner{\omega^\sigma_\tau, \eta^\sigma_{\clos{\tau}}}_{\dR}} - \sum_{\substack{p < \infty \\ \sigma \in H_K}} (v_p(\omega^\sigma_\tau) + v_p(\eta^\sigma_{\clos{\tau}}))\log p}
		\end{align*}
        for any nonzero choices of $\omega_\tau \in \Omega(A)_\tau$ and $\eta_{\clos{\tau}} \in \Omega(A^t)_{\clos{\tau}}$.
	\end{defi}
	Note that if $\tau \not \in \Phi$, then $\mc{N}(\mc{A}, \tau) = 0$ and so $h(A, \tau)$ is $0$ as well.
	
	Just as with the Faltings height, this $\tau$-component is independent of the abelian variety itself.
    Thus, we will write $h(\Phi, \tau)$ for $h(A, \tau)$.
	
	\begin{theo}[{{\cite[Thm 2.2]{Yua18}}}]\label{thm:fullvscomponent}
		If $A$ has CM of type $(\mc{O}_E, \Phi)$, the height $h(A, \tau)$ depends only on the pair $(\Phi, \tau)$.
	\end{theo}

\subsection{Colmez Decomposition} \label{subsec:Colmez Decomposition}

We review the decomposition of the Faltings heights of CM-abelian varieties $h(\Phi)$ given in \cite{Col93}.
Let $p$ be a prime number and let $\sigma \in H_K$ and $\tau \in H_E$.
The projection $\mc{O}_K \otimes \mc{O}_E \to \mc{O}_K$ given by $\sigma$ gives $H^1(\mc{A})_{\tau}$ the structure of an $\mc{O}_K$-module.
We then define $v_{p, C}(\omega)$ if $\omega \in H^1(A)_{\tau} \otimes_{K, \sigma} \clos{\Q_p}$ (resp. $\omega \in H^0(A, \Omega_A^g) \otimes_{K, \sigma} \clos{\Q_p}$) by $v_{p, C}(\omega) = 0$ if $\omega$ is a generator of the $\mc{O}_p$-module $H^1(\mc{A})_\tau \otimes_{\mc{O}_K, \sigma} \mc{O}_p$ (resp. $H^0(\mc{A}, \Omega_\mc{A}^g) \otimes_{\mc{O}_K, \sigma} \mc{O}_p$) and $v_{p, C}(\alpha\omega) = v_p(\alpha) + v_{p, C}(\omega)$ if $\alpha\in \clos{\Q_p}$.
We use the superscript $v_{p, C}$ to contrast it with the valuation $v_{p, YZ}$ defined in the previous subsection.
On $H^0(\mc{A}, \Omega^g_\mc{A})$, this is identical to the valuation given for the Faltings height.

Let $\omega_\tau$ be a basis element of the one-dimensional $K$-vector space $H^1(A)_\tau$.
We assume $K/\Q$ is Galois and identify $H_E$ with $\Hom(E, K)$.
In this way, we can view $\sigma \tau$ as an element of $H_E$ and so $\omega_\tau^\sigma \in H^{\sigma \tau}(A^\sigma)$.
Complex conjugation also induces a topological isomorphism between $A^\sigma(\C)$ and $A^{\clos{\sigma}}(\C)$ and thus another isomorphism denoted $c$ between $H_1(A^\sigma(\C), \Q)$ and $H_1(A^{\clos{\sigma}}(\C), \Q)$.
We have $c(\alpha u) = \alpha c(u)$ if $\alpha \in E$ and $u \in H_1(A^\sigma(\C), \Q)$.
Choose, for each $\sigma \in H_K$, a nonzero element $u_\sigma \in H_1(A^\sigma(\C), \Q)$, such that $u_{\clos{\sigma}} = c(u_\sigma)$ and define
	\[\inner{\omega_\tau^\sigma, \omega_{\clos{\tau}}^\sigma, u_\sigma} \coloneqq \paren*{\inner{\omega_\tau^\sigma, u_\sigma} \frac{2\pi i}{\inner{\omega_{\clos{\tau}}^\sigma, u_\sigma}}}^{1/2}.\]
Colmez proves the following.

\begin{lemm}[{\cite[Lem. II.2.9]{Col93}}]\label{lem:II.2.9}
    The quantity
    \[\frac{-1}{[K : \Q]}\sum_{\sigma \in H_K} \paren*{\log \abs{\inner{\omega_\tau^\sigma, \omega_{\clos{\tau}}^\sigma, u_\sigma}}_\infty - \frac{1}{2} \sum_{p < \infty} \log p (v_{p, C}(\omega_\tau^\sigma) - v_{p, C}(\omega_{\clos{\tau}}^\sigma))}\]
    only depends on $E, \tau, \Phi$ and not on the choice of $X, K, \omega_\tau, \omega_{\clos{\tau}}$, or $u_\sigma$. We denote it $ht(\Phi, \tau)$.
\end{lemm}

\section{Comparison}\label{sec:Comparison of YZ and Colmez}

We will prove that the component heights $h(\Phi, \tau)$ and $ht(\Phi, \tau)$ are the same up to a constant.

\begin{theo}\label{thm:ColmezYuanComparison}
    These two heights decompositions differ by $\log \abs{d_E}$ where $d_E$ is the absolute discriminant of $E$.
    Namely, for any CM field $E$, CM-type $\Phi \subset H_E$, and $\tau \in \Phi$, we have
    \[h(\Phi, \tau) = ht(\Phi, \tau) + \frac{1}{2} \log 2\pi + \frac{1}{4g} \log \abs{d_E} - \mu_{Art}(a_{\Phi, \tau}^0).\]
\end{theo}
\begin{proof}
    The Weil-pairing
    \[H_1(A^\sigma(\C), \Z) \times H_1(A^{t, \sigma}(\C), \Z) \to 2\pi i\Z\]
    is a perfect pairing between rank $2g$ $\Z$-modules.
    Moreover, both admit a compatible $\mc{O}_E$-action so that we may identify each with fractional ideals of $E$ and so that the pairing becomes a twist of the trace pairing $\inner{u, v} = 2\pi i\Tr_{E/\Q}(u \conjugate{v})$.
    
    For each $\sigma \in H_K$, choose nonzero elements $u_\sigma \in H_1(A^\sigma(\C), \Z)$ and $v_\sigma \in H_1(A^{t, \sigma}(\C), \Z)$ such that $u_{\clos{\sigma}} = c(u_\sigma)$ and both are equal to $1$ under our identification of $H_1(A^\sigma, \Z)$ and $H_1(A^{t, \sigma}, \Z)$ with fractional ideals of $E$.
    We can find elements $\alpha_{i, \sigma}, \beta_{i, \sigma} \in E$ that form a dual basis of $H_1(A^\sigma, \Z)$ and $H_1(A^{t, \sigma}, \Z)$ so that $\inner{\alpha_{i, \sigma}u_\sigma, \beta_{j, \sigma}v_\sigma} = 2\pi i\delta_{ij}$.
    Let $\mf{a}_\sigma$ denote the fractional ideal generated by the $\alpha_{i, \sigma}$ and $\mf{b}_\sigma$ the fractional ideal generated by the $\beta_{i, \sigma}$.
    Since the pairing is the trace pairing, we have that $\mf{b}_\sigma = \conjugate{\mf{a}_\sigma^{-1}\mf{d}_E^{-1}}$, where $\mf{d}_E$ is the different ideal.

    For each $\tau \in H_E$, choose basis elements $\omega_\tau \in H^1(A)_\tau$ and $\eta_\tau \in H^1(A^t)_\tau$ so that $\omega_{\clos{\tau}} = c\omega_\tau$ and $\eta_{\clos{\tau}} = \eta_\tau$.
    The Yuan--Zhang height is
    \begin{align*}
    	2h(\Phi, \tau) = \frac{-1}{[K : \Q]} \sum_{\sigma \in H_K}\Bigg(&\log \abs{\inner{\omega_\tau^\sigma, \eta_{\clos{\tau}}^\sigma}}\\
    	& - \sum_{p < \infty} \paren*{v_{p, YZ}(\omega_\tau^\sigma) + v_{p, YZ}(\eta_{\clos{\tau}}^\sigma)}\log p\Bigg).
    \end{align*}
    The pairing $H^1_{\dR}(A/K) \times H^1_{\dR}(A^t/K)$ is dual to the pairing on homology and so we can write
    \begin{align*}
        \inner{\omega_\tau^\sigma, \eta_{\conjugate{\tau}}^\sigma} =&\frac{1}{2\pi i}\sum_i \inner{\omega_\tau^\sigma, \alpha_i u_\sigma}\inner{\eta_{\conjugate{\tau}}^\sigma, \beta_i v_\sigma} \\
        =&\frac{1}{2\pi i}\inner{\omega_\tau^\sigma, u_\sigma}\inner{\eta_{\conjugate{\tau}}^\sigma, v_\sigma} \sum_i \sigma\tau(\alpha_i)\sigma \conjugate{\tau}(\beta_i)\\
        =&\frac{1}{2\pi i}\inner{\omega_\tau^\sigma, u_\sigma}\inner{\eta_{\conjugate{\tau}}^\sigma, v_\sigma}.
    \end{align*}
    The second equality is because the pairing respects $\mc{O}_E$-action and $\omega^\sigma_\tau$ is in the $\sigma\tau$-eigencomponent of $H^1_{\dR}(A^\sigma/K)$.
    The third equality is because the summation is equal to $1$ for the following reason.
    If we let $M$ be the matrix $\set{\tau_i \alpha_j}_{i, j}$ for all places $\tau_i$ of $E$ and $N$ be the matrix $\set{\tau_i \conjugate{\beta_j}}_{i, J}$, then $M^tN = I_{2g}$ since $\Tr_{E/\Q}(\alpha_i \conjugate{\beta_j}) = \delta_{ij}$ and hence $MN^t = I_{2g}$ as well.
    
    Combining the two, we see that
    \begin{equation}\label{eqn:Yuan-Zhang Height Comp}
        \begin{split}
            2 h(\Phi, \tau) = \frac{-1}{[K : \Q]} \sum_{\sigma \in H_K} \bigg(&\log \abs*{\inner{\omega_\tau^\sigma, u_\sigma}} + \log \abs*{\inner{\eta_{\conjugate{\tau}}^\sigma, v_\sigma}} - \log2\pi\\
            &- \sum_{p < \infty} \log p (v_{p, YZ}(\omega_\tau^\sigma) + v_{p, YZ}(\eta_{\conjugate{\tau}}^\sigma))\bigg)\\
        \end{split}
    \end{equation}
    
    The height given by Colmez is Galois invariant and thus $ht(\Phi, \tau) = ht(\conjugate{\Phi}, \conjugate{\tau})$.
    It is also invariant under the choice of abelian variety so we will choose $A$ for calculating $ht(\Phi, \tau)$ and $A^t$ for calculating $ht(\conjugate{\Phi}, \conjugate{\tau})$.
    By \cite[Lem. II.2.13]{Col93}, for each $\sigma \in H_K$ and $\tau \in H_E$, there exists a $\beta_{\sigma, \tau} \in K^\times$ (and $\gamma_{\sigma, \conjugate{\tau}} \in K^\times$) such that
    \[\inner{\omega_\tau^\sigma, u_\sigma} \inner{\omega_{\clos{\tau}}^\sigma, u_\sigma} = \sigma(\beta_{\sigma, \tau}) 2\pi i.\]
    Putting this in gives that $ht(\Phi, \tau) + ht(\conjugate{\Phi}, \conjugate{\tau})$ is equal to
    \begin{align*}
    	\frac{-1}{[K : \Q]} \sum_{\sigma \in H_K} &\bigg(\log \paren*{\abs*{\inner{\omega_\tau^\sigma, u_\sigma}}\abs*{\inner{\eta_{\conjugate{\tau}}^\sigma, v_\sigma}}} - \frac{1}{2}\log\paren*{ \abs{\sigma(\beta_{\sigma, \tau})}\abs{\sigma(\gamma_{\sigma, \conjugate{\tau}})}}\\
        &- \frac{1}{2}\sum_{p < \infty} \log p (v_{p, C}(\omega_\tau^\sigma) - v_{p, C}(\omega_{\conjugate{\tau}}^\sigma) + v_{p, C}(\eta_{\conjugate{\tau}}^\sigma) - v_{p, C}(\eta_{\tau}^\sigma))\bigg)\\
    \end{align*}
    The element $\sigma(\beta_{\sigma, \tau}) \in K^\times$ satisfies the product formula.
    Moreover at local places $p$, the valuation is given by
    \[v_p(\sigma(\beta_{\sigma, \tau})) = v_{p, C}(\omega_\tau^\sigma) + v_{p, C}(\omega_{\conjugate{\tau}}^\sigma) - v_p(\sigma \tau(\mf{a}_\sigma)) - v_p(\sigma \conjugate{\tau}(\mf{a}_\sigma)).\]
    The same, \textit{mutatis mutandis}, holds true for $\sigma(\gamma_{\sigma, \conjugate{\tau}})$. 
    Using that $\mf{a}_\sigma\conjugate{\mf{b}_\sigma} = \conjugate{\mf{d}_E^{-1}}$, we combine everything to get
    \begin{equation}\label{eqn:Colmez Height Comp}
        \begin{split}
            ht(\Phi, \tau) + ht(\conjugate{\Phi}, \conjugate{\tau}) = \frac{-1}{[K : \Q]} \sum_{\sigma \in H_K} \Bigg(&\log \abs*{\inner{\omega_\tau^\sigma, u_\sigma}} + \log \abs*{\inner{\eta_{\conjugate{\tau}}^\sigma, v_\sigma}}\\
            &- \sum_{p < \infty} \log p \bigg(v_{p, C}(\omega_\tau^\sigma) + v_{p, C}(\eta_{\conjugate{\tau}}^\sigma)\\
            	& - \frac{v_p(\sigma \tau \mf{d}_E^{-1}) + v_p(\sigma \conjugate{\tau}\mf{d}_E^{-1})}{2}\bigg)\Bigg)\\
        \end{split}
    \end{equation}

    Comparing Equations \ref{eqn:Yuan-Zhang Height Comp} and \ref{eqn:Colmez Height Comp} we see that
    \begin{align*}
        h(\Phi, \tau) - ht(\Phi, \tau) - \frac{1}{2} \log 2\pi =&\\
        \frac{1}{2[K : \Q]}\sum_{\substack{\sigma \in H_K\\p < \infty}} \log p \bigg(&v_{p, YZ}(\omega_\tau^\sigma) - v_{p, C}(\omega_\tau^\sigma) + v_{p, YZ}(\eta_{\conjugate{\tau}}^\sigma) \\
        &- v_{p, C}(\eta_{\conjugate{\tau}}^\sigma) + \frac{v_p(\sigma \tau \mf{d}_E^{-1}) + v_p(\sigma \conjugate{\tau}\mf{d}_E^{-1})}{2}\bigg)
    \end{align*}

    We now compute the local terms.
    Fix $p < \infty$ and $\sigma \in H_K$, then we view $\sigma \colon K \to \clos{\Q_p}$ which gives a place $v$ of $K$.
    By \cite[Lem. II.1.2]{Col93}, the de Rham cohomology $H^1_{\dR}(\mc{A}/\mc{O}_{K_v})$ is a free $\mc{O}_{K_v} \otimes \mc{O}_E$-module of rank $1$.
    After localizing, we have 
    \[\mc{O}_{K_v} \otimes \mc{O}_E \cong \prod_{\mf{p} \mid p} \mc{O}_{K_v} \otimes \mc{O}_{E_\mf{p}}.\]
    Let $\mf{p}$ be the prime given by $\tau \colon E \to K_v$ and let $\Phi_\mf{p}$ and $\conjugate{\Phi}_\mf{p}$ denote the places of $\Phi$ contained in $H_{E_\mf{p}} \subset H_E$.
    Let $R_{\Phi_\mf{p}}$ be the image of $\mc{O}_{K_v} \otimes \mc{O}_{E_\mf{p}}$ under the map
    \[\mc{O}_{K_v} \otimes \mc{O}_{E_\mf{p}} \to \prod_{\rho \colon E_\mf{p} \to \mc{O}_{K_v}} \mc{O}_{K_v, \rho} \to \prod_{\rho \in \Phi_\mf{p}} \mc{O}_{K_v, \rho}.\]
    The valuation $v_{p, YZ}$ is given on $\mc{O}_{K_v, \tau}$ under the projection map $R_{\Phi_{\mf{p}}} \to \mc{O}_{K_v, \tau}$, which is surjective.
    On the other hand, the valuation $v_{p, C}$ is given by the intersection
    \[R_{\Phi_{\mf{p}}} \cap \paren*{\paren*{\prod_{\rho \in \Phi_\mf{p} \bs \tau} \set{0}} \times \mc{O}_{K_v, \tau}} \to \mc{O}_{K_v, \tau}.\]
    Let $\pi \in \mc{O}_{K_v, \tau}$ be such that $v_{p, C}(\pi) = 0$.
    Write $\mc{O}_{E_\mf{p}} = \Z_p[\alpha]$ so that $R_{\Phi_\mf{p}}$ is generated as an $\mc{O}_{K_v}$-module by $\set{(\rho(\alpha^i))_{\rho \in \Phi_\mf{p}}}_{0 \le i < \abs{\Phi_\mf{p}}}$.
    Then $\pi$ is such that the $\tau$-th row of the inverse of the Vandermonde matrix associated to $\alpha$ and $\Phi_\mf{p}$ has minimum valuation $-v_p(\pi)$.
    The $\tau$-th row of the inverse matrix consists of terms involving combinations of symmetric polynomials of the $\rho(\alpha)$ all divided by the product
    \[\prod_{\rho \in \Phi_\mf{p} \bs \tau} (\rho(\alpha) - \tau(\alpha)).\]
    Thus, we have that
    \[v_{p, YZ}(\omega_\tau^\sigma) - v_{p, C}(\omega_\tau^\sigma) = v_p\paren*{\prod_{\rho \in \Phi_\mf{p}\bs \tau} (\rho(\alpha) - \tau(\alpha))}.\]
    Comparing the right hand side with \cite[Lem. I.2.4, Prop. I.2.6]{Col93}, we have that
    \[v_{p, YZ}(\omega_\tau^\sigma) - v_{p, C}(\omega_\tau^\sigma) = v_p(\sigma\tau \mf{d}_E) - \mu_{Art, p}(a_{\Phi, \tau})/ \log p.\]
    Thus, we end up with
    \begin{align*}
    	h(\Phi, \tau) - ht(\Phi, \tau) = &\frac{1}{2[K : \Q]}\sum_{\substack{\sigma \in H_K\\p < \infty}} \bigg(\log p \paren*{\frac{v_p(\sigma \tau \mf{d}_E) + v_p(\sigma \conjugate{\tau}\mf{d}_E)}{2}}\\
    	& - \mu_{Art, p}(a_{\Phi, \tau}) - \mu_{Art, p}(a_{\conjugate{\Phi}, \conjugate{\tau}})\bigg) + \frac{1}{2} \log 2\pi \\
    	=&\frac{1}{2[E : \Q]}\log \abs{d_E} - \mu_{Art, p}(a_{\Phi, \tau}^0) + \frac{1}{2} \log 2\pi ,
    \end{align*}
    thus proving the result.
\end{proof}

As a direct consequence of this Theorem and our comparison Theorem \ref{thm:ColmezYuanComparison}, we recover the observation made in \cite{Yua18} that heights of nearby CM-types are independent of the CM-type and place.

\begin{coro}\label{cor:Nearby Height Ind of Pair}
    Let $\Phi_1$ and $\Phi_2$ be two CM-types of $E$ such that $\abs{\Phi_1 \cap \Phi_2} = g - 1$.
    Let $\tau_i = \Phi_i \bs (\Phi_1 \cap \Phi_2)$ for $i = 1, 2$ be the place at which they differ.
    Then
    \[h(\Phi_1, \tau_1) + h(\Phi_2, \tau_2)\]
    depends only on $E$.
\end{coro}
\begin{proof}
    By Theorems \ref{thm:ColmezYuanComparison} and \ref{thm:ColmezHeightFunction}, it suffices to show that $a_{\Phi_1, \tau_1}^0 + a_{\Phi_2, \tau_2}^0$ depends only on $E$.
    But, this is because a simple computation shows that for $x \in \Gal(\clos{\Q}/\Q)$,
    \[a_{\Phi_1, \tau_1}^0(x) + a_{\Phi_2, \tau_2}^0(x) = \begin{cases}
        2 & \text{if $x|_E = \mathrm{Id}$,}\\
        0 & \text{if $x|_E = c$,}\\
        1 & \text{otherwise.}
    \end{cases}\]
\end{proof}

Moreover, this leads to another proof of \cite[Cor. 2.6]{Yua18} relating the height of nearby CM-types and Faltings heights of CM abelian varieties.

\begin{coro}
    Let $\Phi_1, \Phi_2, \tau_1, \tau_2$ be as in the statement of Corollary \ref{cor:Nearby Height Ind of Pair}.
    Then
    \[\frac{1}{g2^g}\sum_\Phi h(\Phi) = \frac{h(\Phi_1, \tau_1) + h(\Phi_2, \tau_2)}{2} - \frac{1}{2} \log 2\pi - \frac{1}{4g} \log \abs{d_F},\]
    where the sum is taken over all $2^g$ CM-types of $E$.
\end{coro}
\begin{rema}
    The extra $\log 2\pi$ term not seen in \cite[Cor. 2.6]{Yua18} is because we use a different normalizing factor for the archimedean contribution in the Faltings height.
    This is expanded on in the remark after Definition \ref{defn:Faltings Height}.
\end{rema}
\begin{proof}
    We write the left hand side in terms of class functions using linearity.
    A straightforward calculation shows that for $x \in \Gal(\clos{\Q}/\Q)$,
    \[\frac{1}{g2^g} \sum_{\Phi}A_\Phi(x) = \frac{a_{\Phi_1, \tau_1}^0(x) + a_{\Phi_2, \tau_2}^0(x)}{2} = \begin{cases}
        1 & \text{if $x|_E = \mathrm{Id}$,}\\
        0 & \text{if $x|_E = c$,}\\
        \frac{1}{2} & \text{otherwise.}
    \end{cases}\]
    Thus Theorem \ref{thm:ColmezHeightFunction} gives that
    \[\frac{1}{g2^g}\sum_\Phi h(\Phi) = \frac{ht(a_{\Phi_1, \tau_1}^0 + a_{\Phi_2, \tau_2}^0)}{2} - \frac{\mu_{Art}(a_{\Phi_1, \tau_1}^0 + a_{\Phi_2, \tau_2}^0)}{4}.\]
    We have the equality $a_{\Phi_1, \tau_1}^0 + a_{\Phi_2, \tau_2}^0 = 1 + \frac{1}{g} \Ind_{E/F}^{E/\Q} \chi_{E/F}$, where $\chi_{E/F}$ is the nontrivial character of $\Gal(E/F)$.
    Thus its log conductor is
    \[\mu_{Art}(a_{\Phi_1, \tau_1}^0 + a_{\Phi_2, \tau_2}^0) = \frac{1}{g}\mu_{Art}\paren*{\Ind_{E/F}^{E/\Q} \chi_{E/F}} = \frac{1}{g} \log \abs*{\frac{d_E}{d_F}}.\]
    Now Theorem \ref{thm:ColmezYuanComparison} gives
    \begin{align*}
        \frac{1}{g2^g}\sum_\Phi h(\Phi) - \frac{h(\Phi_1, \tau_1) + h(\Phi_2, \tau_2)}{2} =& - \frac{1}{2} \log 2\pi - \frac{1}{4g} \log \abs{d_E} \\
        &+ \frac{\mu_{Art}(a_{\Phi_1, \tau_1}^0 + a_{\Phi_2, \tau_2}^0)}{4}\\
        =&- \frac{1}{2} \log 2\pi - \frac{1}{4g} \log \abs{d_F}.
    \end{align*}
\end{proof}

\bibliographystyle{alpha}
\bibliography{references}

@article {Yang18,
    AUTHOR = {Yang, Tonghai and Yin, Hongbo},
     TITLE = {C{M} fields of dihedral type and the {C}olmez conjecture},
   JOURNAL = {Manuscripta Math.},
  FJOURNAL = {Manuscripta Mathematica},
    VOLUME = {156},
      YEAR = {2018},
    NUMBER = {1-2},
     PAGES = {1--22},
      ISSN = {0025-2611,1432-1785},
   MRCLASS = {11G15 (11F41 14K22)},
  MRNUMBER = {3783563},
MRREVIEWER = {Igor\ V.\ Nikolaev},
       DOI = {10.1007/s00229-017-0966-z},
       URL = {https://doi.org/10.1007/s00229-017-0966-z},
}

@article {IMS09,
    AUTHOR = {Ihara, Yasutaka and Murty, V. Kumar and Shimura, Mahoro},
     TITLE = {On the logarithmic derivatives of {D}irichlet {$L$}-functions
              at {$s=1$}},
   JOURNAL = {Acta Arith.},
  FJOURNAL = {Acta Arithmetica},
    VOLUME = {137},
      YEAR = {2009},
    NUMBER = {3},
     PAGES = {253--276},
      ISSN = {0065-1036,1730-6264},
   MRCLASS = {11M20 (11M26 11R42)},
  MRNUMBER = {2496464},
MRREVIEWER = {D.\ R.\ Heath-Brown},
       DOI = {10.4064/aa137-3-6},
       URL = {https://doi.org/10.4064/aa137-3-6},
}

@article {BMT23,
    AUTHOR = {Barquero-Sanchez, Adrian and Masri, Riad and Thorne, Frank},
     TITLE = {The distribution of {$G$}-{W}eyl {CM} fields and the {C}olmez
              conjecture},
   JOURNAL = {Res. Math. Sci.},
  FJOURNAL = {Research in the Mathematical Sciences},
    VOLUME = {10},
      YEAR = {2023},
    NUMBER = {2},
     PAGES = {Paper No. 25, 25},
      ISSN = {2522-0144,2197-9847},
   MRCLASS = {11N45 (11R45)},
  MRNUMBER = {4594565},
MRREVIEWER = {Peter\ Koymans},
       DOI = {10.1007/s40687-023-00389-9},
       URL = {https://doi.org/10.1007/s40687-023-00389-9},
}

@misc{Zha23,
      title={Heights of Special Points on Quaternionic {S}himura Varieties}, 
      author={Roy Zhao},
      year={2023},
      eprint={2309.08886},
      archivePrefix={arXiv},
      primaryClass={math.NT},
      url={https://arxiv.org/abs/2309.08886}, 
}

@incollection {Del71,
	AUTHOR = {Deligne, Pierre},
	TITLE = {Travaux de {S}himura},
	BOOKTITLE = {S\'eminaire {B}ourbaki, 23\`eme ann\'ee (1970/1971)},
	SERIES = {Lecture Notes in Math.},
	VOLUME = {Vol. 244},
	PAGES = {Exp. No. 389, pp. 123--165},
	PUBLISHER = {Springer, Berlin-New York},
	YEAR = {1971},
	MRCLASS = {14G25 (10D25 14K15)},
	MRNUMBER = {498581},
	MRREVIEWER = {Author's review},
}

@article {Tsi18,
    AUTHOR = {Tsimerman, Jacob},
     TITLE = {The {A}ndr\'{e}-{O}ort conjecture for {$\mc A_g$}},
   JOURNAL = {Ann. of Math. (2)},
  FJOURNAL = {Annals of Mathematics. Second Series},
    VOLUME = {187},
      YEAR = {2018},
    NUMBER = {2},
     PAGES = {379--390},
      ISSN = {0003-486X,1939-8980},
   MRCLASS = {11G15 (11G18 14G35)},
  MRNUMBER = {3744855},
MRREVIEWER = {Patrick\ Morton},
       DOI = {10.4007/annals.2018.187.2.2},
       URL = {https://doi.org/10.4007/annals.2018.187.2.2},
}

@misc{Pil24,
      title={Canonical Heights on {S}himura Varieties and the {A}ndr\'e-{O}ort Conjecture}, 
      author={Jonathan Pila and Ananth N. Shankar and Jacob Tsimerman and Hélène Esnault and Michael Groechenig},
      year={2024},
      eprint={2109.08788},
      archivePrefix={arXiv},
      primaryClass={math.NT},
      url={https://arxiv.org/abs/2109.08788}, 
}

@article{Yua18,
    AUTHOR = {Yuan, Xinyi and Zhang, Shou-Wu},
     TITLE = {On the averaged {C}olmez conjecture},
   JOURNAL = {Ann. of Math. (2)},
  FJOURNAL = {Annals of Mathematics. Second Series},
    VOLUME = {187},
      YEAR = {2018},
    NUMBER = {2},
     PAGES = {533--638},
      ISSN = {0003-486X,1939-8980},
   MRCLASS = {11G15 (14G10 14G40)},
  MRNUMBER = {3744857},
MRREVIEWER = {Dipendra\ Prasad},
       DOI = {10.4007/annals.2018.187.2.4},
       URL = {https://doi.org/10.4007/annals.2018.187.2.4},
}

@article{Col93,
    AUTHOR = {Colmez, Pierre},
     TITLE = {P\'{e}riodes des vari\'{e}t\'{e}s ab\'{e}liennes \`a
              multiplication complexe},
   JOURNAL = {Ann. of Math. (2)},
  FJOURNAL = {Annals of Mathematics. Second Series},
    VOLUME = {138},
      YEAR = {1993},
    NUMBER = {3},
     PAGES = {625--683},
      ISSN = {0003-486X,1939-8980},
   MRCLASS = {14K22 (11R42 14F30 14L05)},
  MRNUMBER = {1247996},
MRREVIEWER = {Greg\ W.\ Anderson},
       DOI = {10.2307/2946559},
       URL = {https://doi.org/10.2307/2946559},
}

@article{And18,
    AUTHOR = {Andreatta, Fabrizio and Goren, Eyal Z. and Howard, Benjamin
              and Madapusi Pera, Keerthi},
     TITLE = {Faltings heights of abelian varieties with complex
              multiplication},
   JOURNAL = {Ann. of Math. (2)},
  FJOURNAL = {Annals of Mathematics. Second Series},
    VOLUME = {187},
      YEAR = {2018},
    NUMBER = {2},
     PAGES = {391--531},
      ISSN = {0003-486X,1939-8980},
   MRCLASS = {11G15 (11F41 11G18 14G35 14G40)},
  MRNUMBER = {3744856},
MRREVIEWER = {Martin\ Orr},
       DOI = {10.4007/annals.2018.187.2.3},
       URL = {https://doi.org/10.4007/annals.2018.187.2.3},
}

@article{Obu13,
    AUTHOR = {Obus, Andrew},
     TITLE = {On {C}olmez's product formula for periods of {CM}-abelian
              varieties},
   JOURNAL = {Math. Ann.},
  FJOURNAL = {Mathematische Annalen},
    VOLUME = {356},
      YEAR = {2013},
    NUMBER = {2},
     PAGES = {401--418},
      ISSN = {0025-5831,1432-1807},
   MRCLASS = {14K15 (11G15 11G20 14K22)},
  MRNUMBER = {3048601},
MRREVIEWER = {Fumio\ Hazama},
       DOI = {10.1007/s00208-012-0855-4},
       URL = {https://doi.org/10.1007/s00208-012-0855-4},
}

@article{Yan10,
    AUTHOR = {Yang, Tonghai},
     TITLE = {The {C}howla--{S}elberg formula and the {C}olmez conjecture},
   JOURNAL = {Canad. J. Math.},
  FJOURNAL = {Canadian Journal of Mathematics. Journal Canadien de
              Math\'{e}matiques},
    VOLUME = {62},
      YEAR = {2010},
    NUMBER = {2},
     PAGES = {456--472},
      ISSN = {0008-414X,1496-4279},
   MRCLASS = {11G15 (11F41 11G50)},
  MRNUMBER = {2643052},
MRREVIEWER = {Philippe\ G.\ Michel},
       DOI = {10.4153/CJM-2010-028-x},
       URL = {https://doi.org/10.4153/CJM-2010-028-x},
}
\end{document}